\newcommand{\dist}{{\rm{dist}}}
\newcommand{\reals}{{\mathbb{R}}}
\newcommand{\bfr}{{\mathbf{r}}}
\newcommand{\bfx}{{\mathbf{x}}}
\newcommand{\olt}{{{\vec{t}}}}
\newcommand{\olu}{{{\vec{u}}}}
\newcommand{\scriptt}{{\mathcal{T}}}
\newcommand{\scriptf}{{\mathcal{F}}}
\newcommand{\scriptg}{{\mathcal{G}}}
\newcommand{\qtrcm}{{\hspace{.25cm}}}
\newcommand{\Lp}[2]{\|{#1}\|_{L^{#2}}}
\newcommand{\eps}{{\varepsilon}}
\newtheorem{thm}{Theorem}[section]
\newtheorem{cor}[thm]{Corollary}
\newtheorem{prop}[thm]{Proposition}
\newtheorem{lem}[thm]{Lemma}
\theoremstyle{remark}
\theoremstyle{definition}
\numberwithin{equation}{section}
\numberwithin{thm}{section}
\begin{document}

\subjclass{42B10 (primary), 44A35, 44A12 (secondary).}

\title[Quasi-extremals]{Quasi-extremals for convolution with surface measure on the sphere}

\author{Betsy Stovall}
\address{Department of Mathematics\\
970 Evans Hall \#3840\\
University of California\\
Berkeley, CA 94720-3840
}

\email{betsy@math.berkeley.edu}

\thanks{The author was supported in part by NSF grant DMS-040126.}

\begin{abstract}
If $T$ is the operator given by convolution with surface measure on the sphere, $(E,F)$ is a quasi-extremal pair of sets for $T$ if $\langle T\chi_E, \chi_F \rangle \gtrsim |E|^{d/(d+1)}|F|^{d/(d+1)}$.  In this article, we explicitly define a family $\mathcal{F}$ of quasi-extremal pairs of sets for $T$.  We prove that $\mathcal{F}$ is fundamental in the sense that every quasi-extremal pair $(E,F)$ is comparable (in a rather strong sense) to a pair from $\mathcal{F}$.  This extends work carried out by M. Christ for convolution with surface measure on the paraboloid.
\end{abstract}

\maketitle

\section{Introduction}

Let $T$ be the linear operator which acts on the continuous functions
on $\reals^d$ by convolution with surface measure on the unit sphere
(which we will denote by $S^{d-1}$).  That is, for a continuous
function $f$ on $\reals^d$, $Tf$ is defined by 
\[
     Tf(x) = \int_{S^{d-1}}f(x-\omega)d\sigma(\omega).
\]

Then it is well known that $T$ extends to a continuous operator from
$L^p(\reals^d)$ to $L^q(\reals^d)$ if and only if $(p^{-1},q^{-1})$
lies in the closed triangle with vertices $(0,0)$, $(1,1)$, $(\frac{d}{d+1},
\frac{1}{d+1})$.  Our goal in this article will be to study the behavior of this
operator at the endpoint $(p,q) = (\frac{d+1}{d}, d+1)$ in more detail.  In
particular, we continue work begun by Christ in {\cite{QEx}} by
partially characterizing ``quasi-extremal'' and ``$\eps$-quasi-extremal'' pairs
for $T$.

Let $f$ and $g$ be measurable functions, not identically zero.  Then
by the boundedness of $T$ from $L^{(d+1)/d}$ to $L^{d+1}$ and duality,
we have that
  \[
   | \langle Tf, g \rangle |\lesssim \Lp{f}{(d+1)/d} \Lp{g}{(d+1)/d}.
  \]
We say that $(f,g)$ is an {\it $\eps$-quasi-extremal} pair if we have,
in addition, the lower bound
  \[
   | \langle Tf, g \rangle| \geq \eps \Lp{f}{(d+1)/d} \Lp{g}{(d+1)/d}.
  \]
We say that $(f,g)$ is simply {\it quasi-extremal} if $(f,g)$ is
$\eps$-quasi-extremal for some $\eps \gtrsim 1$.  

If $E$ and $F$ are Borel sets having positive Lebesgue measures, then
$(E,F)$ is an $\eps$-quasi-extremal or a quasi-extremal pair if
$(\chi_{E}, \chi_{F})$ is.

\subsection*{Notation}  As indicated above, we will write
\[
  S^{d-1} := \{ x \in \reals^d:|x| = 1\}.
\]
We will employ the symbols $\lesssim$, $\ll$, and their related symbols
$\gtrsim$, $\sim$, and $\gg$ as follows: Let $A$ and $B$ be positive
numbers and $P$ be some statement.  We will say that $P$ implies that $A
\lesssim B$ when there exists a (large) universal constant $C$ such that
$P$ implies that $A \leq CB$.  We will say that $A \ll B$ implies $P$
when there exists a (small) universal constant $c$ such that $A \leq cB$
implies that $P$ holds.  This use is fairly standard in the harmonic analysis literature.  We will also use
the somewhat less standard notation $\scriptt(E,F) := \langle T\chi_E,\chi_F\rangle$, for
measurable sets $E$ and $F$.  Here, $\langle g,f \rangle$ is the $L^2$ inner product, $\int_{\reals^d} f \overline{g} \, dx$, where $dx$ denotes Lebesgue measure.  Finally, we will write $|\cdot|$ to
indicate Lebesgue measure of a subset $E$ of $\reals^n$, where $n$ will be clear from the context.

\subsection*{Acknowledgements}  The author is indebted to her advisor Michael Christ for suggesting this question and for the invaluable help and advice he gave during work on this project.  She would also like to thank Z.~Gautam and T.~Tao for pointing out an error in an earlier formulation of Theorem \ref{mainthm}.  Finally, the author would like to express her gratitude toward C.~Thiele and the anonymous referee from the Illinois Journal of Math.\ for other helpful advice and suggestions.

\section {Statement of Results} \label{prelimrmks}

Before stating our results, we give an example of a quasi-extremal pair of sets.  Let $\rho \leq 1$ be a positive number, and let $\mathbf{r} =
(r_1,\ldots,r_{d-1})$ be a $(d-1)$-tuple of positive numbers satisfying
\begin{align}
    \label{rismall} \rho &\leq r_i \leq 1, \hspace{.5cm} 1 \leq i \leq d-1,\\
    \label{riinc} r_i &\leq r_{i+1}, \hspace{.5cm} 1 \leq i \leq d-2,\\
    \label{rinondeg} r_i &\geq \rho^{1/2} r_j, \quad 1 \leq i,j \leq d-1.
\end{align}
Note that (\ref{rinondeg}) is equivalent to $r_1 \geq \rho^{1/2} r_{d-1}$ on account of (\ref{riinc}).  We use the redundant formulation to avoid confusion later on.
Then we define $E(\bfr;\rho)$ to be the set of $x\in\reals^d$ such that
\begin{align*}
|x_1| < r_1,\ldots,|x_{d-1}|<r_{d-1},\quad {\rm{dist}}(x+e_d,S^{d-1})<\rho,\quad x_d>-1,
 \end{align*}
 and $F(\bfr;\rho)$ to be the set of $y \in \reals^d$ such that
 \begin{align*}
  |y_1| < \frac{\rho}{r_1}, \ldots,|y_{d-1}| < \frac{\rho}{r_{d-1}}, \quad
{\rm{dist}}(y,S^{d-1})<\rho, \quad y_d < 0.
\end{align*}
Here, $e_d = (0,\ldots,0,1)$.

These pairs of sets are essentially thin neighborhoods of ``dual'' ellipsoids lying on the sphere.  In particular, the ellipsoids share an orientation, the product of corresponding radii is constant, and the ellipsoids lie at opposite poles (though of different spheres).  The pair above is quasi-extremal, as will be shown in \S\ref{basicqexsets}.  Our main theorem states that every $\eps$ quasi-extremal pair $(E,F)$ (after a rotation and translation) is comparable to one of the pairs given above.  This comparability is rather strong and its extent is quantitative in $\eps$.  

Certainly if $R \in O(d)$ is a rotation and $x_0 \in \reals^d$, then the sets 
\begin{align} \label{def:basicqex}
R E(\bfr;\rho) + \{x_0\} \quad RF(\bfr;\rho)+\{x_0\}
\end{align}
are also quasi-extremal.  Throughout, we will refer to such translated and rotated versions of our original pairs, assuming (\ref{rismall}-\ref{rinondeg}), as the {\em basic quasi-extremal pairs}.  

The following theorems will involve constants $C$ and $A$ which depend only on the dimension $d$, and in particular, not on the radii $\bfr,\rho$ or the sets $E,F$.

\begin{thm}\label{mainthm}  For every $\eps>0$ and every $\eps$-quasi-extremal pair
  $(E,F)$, there exists a basic quasi-extremal pair $(\tilde{E},\tilde{F})$ such that
  \begin{align*}
  \scriptt(\tilde{E} \cap E, \tilde{F} \cap F) \geq C^{-1} \eps^{(d+1)/(d-1)}\scriptt(E,F)
  \end{align*}
  and
  \[
  |\tilde{E}| \leq C\eps^{-A}|E| \quad |\tilde{F}| \leq C \eps^{-A}|F|.
  \]
\end{thm}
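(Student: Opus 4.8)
The plan is to reduce the general $\eps$-quasi-extremal pair to a highly structured one through a series of pigeonholing steps, exploiting the geometry of the sphere. First I would normalize: after a rotation and translation, we may assume that a definite fraction of the mass of $\langle T\chi_E, \chi_F\rangle$ comes from pairs $(x,y)$ with $x \in E$, $y \in F$, $|x - y| = 1$, and such that the sphere of radius $1$ centered at $x$ meets $F$ ``transversally'' near $y$ (say the unit normals at the relevant point make an angle bounded away from $0$). By a standard dyadic-pigeonhole argument one localizes $|E|$ and $|F|$ to dyadic values, and localizes the contribution so that it is carried by a piece of $E$ sitting in a thin $\rho$-neighborhood of a spherical cap and a piece of $F$ sitting in a thin $\rho'$-neighborhood of another spherical cap; the quasi-extremality forces $\rho \sim \rho'$ up to constants depending on $\eps$, since otherwise the incidence count between a $\rho$-slab and a $\rho'$-slab is too small relative to the measures.

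Next I would introduce the anisotropic (``ellipsoidal'') structure. Inside the localized piece of $E$, run a greedy/stopping-time decomposition in the $d-1$ tangential directions: at scale comparable to the diameter of the cap in each coordinate, extract a sub-box on which $E$ has density $\gtrsim \eps^{O(1)}$, with tangential side-lengths $r_1 \le \cdots \le r_{d-1}$ (after relabeling coordinates). The key geometric input — essentially the mechanism behind the example in the introduction — is that the adjoint operator $T^*$ maps the indicator of such an $E$-box to a function that is $\gtrsim 1$ on a dual box with tangential side-lengths $\sim \rho/r_i$ and normal thickness $\sim\rho$; this is just the statement that integrating surface measure over a thin slab spreads mass onto the dual slab. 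Pairing this lower bound for $T^*\chi_{\tilde E}$ against $\chi_F$ and using the quasi-extremal lower bound for $\scriptt(E,F)$, one concludes that a $\gtrsim \eps^{O(1)}$-fraction of $F$'s mass must lie in this dual box, which is exactly (a rotated, translated copy of) $F(\bfr;\rho)$. One then checks the non-degeneracy conditions \eqref{rismall}--\eqref{rinondeg}: \eqref{rismall} comes from the localization of scales, \eqref{riinc} is the relabeling, and \eqref{rinondeg} is forced because if two tangential radii were too disparate the dual box would be too ``eccentric'' for the incidence bound to be saturated — equivalently, $T$ restricted to such a configuration would not be quasi-extremal, contradicting the lower bound.

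The measure bounds $|\tilde E| \le C\eps^{-A}|E|$ and $|\tilde F| \le C\eps^{-A}|F|$ are then bookkeeping: at each pigeonholing step we lose only a power of $\eps$ in density, so the structured box $\tilde E$ capturing a $\gtrsim \eps^{O(1)}$ fraction of a $\gtrsim \eps^{O(1)}$-density piece of $E$ has $|\tilde E \cap E| \gtrsim \eps^{O(1)}|\tilde E|$, hence $|\tilde E| \lesssim \eps^{-A}|E \cap \tilde E| \le \eps^{-A}|E|$; the same for $F$. Finally, to get the clean exponent $\eps^{(d+1)/(d-1)}$ in the conclusion $\scriptt(\tilde E \cap E, \tilde F \cap F) \ge C^{-1}\eps^{(d+1)/(d-1)}\scriptt(E,F)$ — rather than some unspecified power — I would be careful in the stopping-time argument to track the $d-1$ tangential directions separately: each of the $d-1$ coordinate refinements costs a factor $\eps^{1/(d-1)}$-ish in the relevant density, and combining these with the normal-direction and scale localizations yields the stated power. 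The main obstacle I anticipate is precisely this last point: making the tangential decomposition genuinely anisotropic and simultaneous across all $d-1$ directions (so that a single box, not a union of boxes, is extracted) while keeping the loss to exactly the claimed power of $\eps$ — a naive iteration gives a worse exponent, so one needs to do the $d-1$ refinements in the right order (largest radius first, using \eqref{riinc}) and argue that the incidence geometry of the sphere is ``efficient'' enough that no logarithmic or dimensional losses accumulate beyond $\eps^{(d+1)/(d-1)}$.
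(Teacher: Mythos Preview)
Your outline misses the actual machinery the paper uses and, more importantly, glosses over the one genuinely hard step.

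First, the structural extraction in the paper is not a box stopping-time argument. The paper builds a tower of parameter sets $\Omega_1\subset\Omega_2\subset\Omega_3\subset\cdots$ (the standard refinement from \cite{CCC}), so that $|\Omega_1|\gtrsim\alpha$, each fiber of $\Omega_2$ over $\Omega_1$ has measure $\gtrsim\beta$, and so on. The ellipsoidal shape of $\Omega_1$ is obtained by an \emph{inflation bound}: one defines a map $\Psi^\natural:\Omega_1\times\scriptf(s)^{d-1}\to E^{d-1}$, computes its Jacobian $\det(F_s(t_1),\ldots,F_s(t_{d-1}))$, bounds the multiplicity via Bezout, and combines this with a convex-approximation lemma to force $F_\tau(\Omega_1)$ into an ellipsoid $A(B)$ with $\det A\lesssim\eps^{-C}\alpha$. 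The dual shape of $\scriptf(s)$ then comes from a separate \emph{slicing bound}. Your ``greedy box extraction'' does not obviously produce an ellipsoid with controlled volume, and your claim that $T^*\chi_{\tilde E}\gtrsim 1$ on the dual box is the content of Proposition~\ref{prop:EFqex} (the forward direction), not a tool for the inverse direction.

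Second, and this is the real gap: the non-degeneracy condition~\eqref{rinondeg}, namely $r_1\gtrsim\rho^{1/2}r_{d-1}$, is the main new difficulty of this paper over the paraboloid case in \cite{QEx}, and your sentence ``if two tangential radii were too disparate the dual box would be too eccentric for the incidence bound to be saturated'' is not an argument. The paper devotes four subsections to this. Assuming $r_1\le B^{-1}\eps^{B'}\rho^{1/2}r_{d-1}$ for contradiction, one shows that for $t\in\scriptf(s)$ the coordinates $t_i$ with $i>k$ (where $r_k\le\rho^{1/2}\le r_{k+1}$) are determined by $t_I=(t_1,\ldots,t_k)$ and $s$ up to $O(\rho/r_i)$; one then slices $\Omega_1$ along the $s_1$-direction, produces sets $\tilde E(\tilde s)\subset E$ each of measure $\gtrsim\eps^C\rho\beta$, and proves these slices are \emph{pairwise disjoint} for $\tilde s^{(1)},\tilde s^{(2)}$ separated by $\gtrsim\eps^C r_{d-1}/N$. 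Summing $N$ disjoint copies contradicts the upper bound $|E|\lesssim\eps^{-C}\alpha^{1/(d-1)}\beta^{d/(d-1)}$. Nothing in your proposal approaches this.

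Finally, your account of the exponent $(d+1)/(d-1)$ is wrong: it does not come from $d-1$ coordinate refinements. It comes at the very end from the inflation bound applied to $\Omega_2,\Omega_3$, giving $|\tilde E|\gtrsim\beta(\alpha\beta)^{1/(d-1)}$, which combined with $\alpha\beta^d\gtrsim\eps^{d+1}|E|^{d-1}$ yields $|\tilde E|\gtrsim\eps^{(d+1)/(d-1)}|E|$ and hence $\scriptt(\tilde E,\tilde F)\gtrsim\alpha|\tilde E|\gtrsim\eps^{(d+1)/(d-1)}\scriptt(E,F)$.
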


Hence a quasi-extremal pair $(E,F)$ may be compared to a basic quasi-extremal pair, whose elements are not too much bigger than $E$ and $F$.  We will also show that Theorem \ref{mainthm} implies that a quasi-extremal pair $(E,F)$ may be compared to a basic quasi-extremal pair whose elements are smaller than $E,F$.  

\begin{thm} \label{thmsets2} For every $\eps>0$ and every $\eps$-quasi-extremal pair
  $(E,F)$, there exists a basic quasi-extremal pair $(\tilde{E},\tilde{F})$ such that
\[
  \scriptt(\tilde{E}\cap E,\tilde{F}\cap F) \geq C^{-1}\eps^{A} \scriptt(E,F)
\]
and
\[
  |\tilde{E}| \leq |E| \quad |\tilde{F}| \leq |F|.
\]
\end{thm}

Though Theorem \ref{thmsets2} is perhaps more aesthetically pleasing than Theorem \ref{mainthm} because only one comparison of $(E,F)$ with the basic quasi-extremal pair involves the loss of a power of $\eps$, Theorem \ref{mainthm} is actually the stronger of the two.

Finally, Theorem \ref{thmsets2} is the analogue of the main theorem of \cite{QEx} and, by the arguments in that work, implies the following theorem on $\eps$ quasi-extremal pairs of functions.

\begin{thm} \label{thmfxns} For every $\eps > 0$ and every $\eps$ quasi-extremal pair of non-negative functions $(f,g)$, there exist sets $E,F$ and real numbers $s,t > 0$, such that 
\[
s \chi_E \leq f \quad t\chi_F \leq g;
\]
furthermore, there exists a basic quasi-extremal pair $(\tilde{E},\tilde{F})$ with $|\tilde{E}| \leq |E|$, $|\tilde{F}| \leq |F|$ such that 
\[
s\cdot t\cdot \scriptt(E \cap \tilde{E}, F \cap \tilde{F}) \geq C^{-1}\eps^A \langle Tf,g \rangle.
\]
\end{thm}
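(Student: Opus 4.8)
The plan is to derive Theorem~\ref{thmfxns} from Theorem~\ref{thmsets2} by passing from functions to their dyadic level sets, exactly as in \cite{QEx}. Write $p = (d+1)/d$. By homogeneity we may normalize so that $\Lp{f}{p} = \Lp{g}{p} = 1$, whence $\langle Tf,g\rangle \geq \eps$; since that quantity is also $\lesssim 1$, the normalization costs nothing at the end. Decompose $f \sim \sum_j 2^j\chi_{E_j}$ and $g \sim \sum_k 2^k\chi_{G_k}$, where $E_j = \{2^j \le f < 2^{j+1}\}$ and $G_k = \{2^k \le g < 2^{k+1}\}$ are disjoint in $j$ (resp.\ $k$) and satisfy $\sum_j 2^{jp}|E_j| \sim 1$, $\sum_k 2^{kp}|G_k| \sim 1$. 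Since $T$ is positivity-preserving, pairing with $g$ gives
\[
\eps \;\lesssim\; \sum_{j,k} 2^{j+k}\,\scriptt(E_j,G_k) \;\lesssim\; \langle Tf,g\rangle \;\lesssim\; 1 .
\]

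The heart of the matter is a combinatorial reduction of this double sum to (essentially) one term, carried out just as in \cite{QEx}. What one needs is a pair $(j_0,k_0)$ — or a box of $O(\log\frac1\eps)$ consecutive scales, which for Theorem~\ref{thmsets2} works equally well after taking unions — such that $2^{j_0+k_0}\scriptt(E_{j_0},G_{k_0}) \gtrsim \eps^{C_1}$ and, in addition, $j_0 + k_0 \lesssim \log\frac1\eps$. Beyond the main bound $\scriptt(E_j,G_k) \lesssim |E_j|^{d/(d+1)}|G_k|^{d/(d+1)}$ — under which each summand equals a product of the bounded quantities $(2^{jp}|E_j|)^{d/(d+1)}$ and $(2^{kp}|G_k|)^{d/(d+1)}$ — one invokes the elementary estimates $\scriptt(E_j,G_k) \le \sigma(S^{d-1})\min(|E_j|,|G_k|)$ coming from the boundedness of $T$ on $L^1$ and $L^\infty$, together with the bounds obtained by interpolating these three. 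Combined with $\sum_j 2^{jp}|E_j| \lesssim 1$, this is what forces the contribution of pairs with $|j-k|$ large into a convergent geometric-type sum; a pigeonholing over the remaining $O((\log\frac1\eps)^2)$ scales — in which one may even afford a logarithmic loss, since any fixed power of $\eps$ is allowed — then produces the required pair. I expect this reduction to be the main obstacle: the tails of the sum cannot be controlled purely softly, and keeping the eventual exponent of $\eps$ finite (and tracking the dependence on $d$) is precisely what requires the detailed argument, not merely the statement, of \cite{QEx}.

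Granting such a pair, set $E = E_{j_0}$, $F = G_{k_0}$, $s = 2^{j_0}$, $t = 2^{k_0}$; then $s\chi_E \le f$, $t\chi_F \le g$, and $|E|,|F| < \infty$. From $2^{j_0 p}|E_{j_0}| \lesssim 1$ we get $|E_{j_0}|^{d/(d+1)} \lesssim 2^{-j_0/d}$, and likewise for $G_{k_0}$, so
\[
\scriptt(E,F) \;\gtrsim\; \eps^{C_1} 2^{-(j_0+k_0)} \;\gtrsim\; \eps^{C_1} 2^{-(j_0+k_0)(d-1)/d}\,|E|^{d/(d+1)}|F|^{d/(d+1)} \;\gtrsim\; \eps^{C_2}\,|E|^{d/(d+1)}|F|^{d/(d+1)},
\]
the middle step using $2^{-(j_0+k_0)/d} \gtrsim |E|^{d/(d+1)}|F|^{d/(d+1)}$ and the last using $j_0 + k_0 \lesssim \log\frac1\eps$. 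Thus $(E,F)$ is an $\eps^{C_2}$-quasi-extremal pair of sets, with $C_2$ depending only on $d$. Applying Theorem~\ref{thmsets2} with $\eps$ replaced by $\eps^{C_2}$ produces a basic quasi-extremal pair $(\tilde E,\tilde F)$ with $|\tilde E| \le |E|$, $|\tilde F| \le |F|$, and $\scriptt(E \cap \tilde E, F \cap \tilde F) \gtrsim \eps^{C_2 A}\,\scriptt(E,F)$. Multiplying by $st = 2^{j_0+k_0}$ and recalling $2^{j_0+k_0}\scriptt(E,F) \gtrsim \eps^{C_1}$ yields $s\,t\,\scriptt(E \cap \tilde E, F \cap \tilde F) \gtrsim \eps^{C_1 + C_2 A}$; since $\langle Tf,g\rangle \lesssim 1$ after normalization, this is $\gtrsim \eps^{C_1+C_2A}\langle Tf,g\rangle$, and as both sides are homogeneous of the same degree in $(f,g)$, undoing the normalization gives the asserted inequality, with the exponent of $\eps$ in Theorem~\ref{thmfxns} equal to $C_1 + C_2 A$.
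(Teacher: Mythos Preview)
Your approach is exactly what the paper does: it gives no explicit proof of Theorem~\ref{thmfxns}, instead stating that it follows from Theorem~\ref{thmsets2} ``by the arguments in \cite{QEx},'' and your sketch (dyadic level sets, combinatorial pigeonholing to a single scale, then Theorem~\ref{thmsets2}) is precisely that reduction. One harmless slip: from $2^{j_0 p}|E_{j_0}| \lesssim 1$ with $p=(d+1)/d$ one actually gets $|E_{j_0}|^{d/(d+1)} \lesssim 2^{-j_0}$ rather than $2^{-j_0/d}$, so the quasi-extremality of $(E,F)$ follows directly from $\scriptt(E,F) \gtrsim \eps^{C_1}2^{-(j_0+k_0)}$ and the side condition $j_0+k_0 \lesssim \log\tfrac1\eps$ is not needed.
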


Except for the verification of condition (\ref{rinondeg}), our proof is
a more or less straightforward adaptation of \cite{QEx}; where possible,
we will refer to that article for details.  In particular, for the proof of Theorem \ref{thmfxns}, we refer the reader to \cite{QEx}.

\section{Some context}\label{sec:context}

First, we compare the results here to those obtained in \cite{QEx}.  In that work, Christ considered the operator $T_P$, defined by convolution with a measure on the paraboloid $P = \{x \in \reals^d: x_d = |x'|^2\}$;
\[
T_Pf(x) := \int_{\reals^{d-1}} f(x'-t',x_d-|t|^2)\,dt.
\]
He proved that the basic quasi-extremal pairs for $T_P$, also at the $L^p \to L^q$ endpoint $(\frac{d+1}{d},d+1)$, are obtained by applying a rotation $S \in O(d)$ (fixing the $x_d$ component) to the following pairs:  If $x_0,y_0 \in \reals^d$ with $x_0-y_0 \in P$ and $\rho,r_1,\ldots,r_{d-1}$ are {\bf any} positive radii, define $E_P(x_0,y_0,\bfr,\rho)$ to be the set of $x \in \reals^d$ such that
\begin{align*}
|x_i-(x_0)_i| < r_i, \, 1 \leq i \leq d-1, \quad |x_d-(y_0)_d - |x'-{y_0}'|^2| < \rho
\end{align*}
and $F_P(x_0,y_0,\bfr,\rho)$ to be the set of $y \in \reals^d$ such that
\begin{align*}
|y_i-(y_0)_i| < \frac{\rho}{r_i}, \, 1 \leq i \leq d-1, \quad |y_d-(x_0)_d + |y'-{x_0}'|^2| < \rho.
\end{align*}

Thus the families of basic quasi-extremal pairs for the two operators are similar.  The chief difference is in the admissible radii.  For convolution with surface measure on the paraboloid, any collection of radii $\rho,r_1,\ldots,r_{d-1} > 0$ give rise to quasi-extremal pairs.  For convolution with surface measure on the sphere, compactness forces us to take $0 < \rho < 1$ and $\rho < r_i < 1$.  The further condition, (\ref{rinondeg}), however, comes from geometric properties other than compactness.

We will write $E_P(\bfr,\rho) := E_P(0,0,\bfr,\rho)$ and $F_P(\bfr,\rho) := F_P(0,0,\bfr,\rho)$.  

One explanation for the extra rigidity of the quasi-extremals for the sphere is that the paraboloid posses a product structure, while the sphere does not.  One can see a manifestation of this by considering the pair
\[
E:=E_P((1,\ldots,1,\rho,\ldots,\rho);\rho) \quad F:=F_P((1,\ldots,1,\rho,\ldots,\rho);\rho).
\]
The radii are admissible for the paraboloid, and one may think of $E$ and $F$ as exhibiting the product structure $P = P \cap (\reals^k \times \{0\}^{d-1-k}) + P \cap (\{0\}^{k} \times \reals^{d-1-k})$.  In the case of the sphere, however, (\ref{rinondeg}) fails and the radii are not admissible.  

When one considers either the sphere or the paraboloid, two of the basic quasi-extremal pairs of sets are quite well-known.  When the $r_i$ all equal $\rho$, $E$ is a ball of radius $\rho$ and $F$ is a $\rho$-neighborhood of a patch of the hypersurface.  When the $r_i$ all equal $\rho^{1/2}$, $(E,F)$ is essentially the Knapp example; $E \approx [-\rho^{1/2},\rho^{-1/2}]^{d-1}\times[-\rho,\rho]$, $F \approx E+\{y_0\}$ ($y_0 = -e_d$ for $S^{d-1}$ and $0$ for $P$).  Together, these examples are often used to show that $(p,q) = (\frac{d+1}{d},d+1)$ is an endpoint for the Lebesgue bounds for $T$ or $T_P$, as in \cite{Stein}, for instance.  

The operators studied here and in \cite{QEx} are merely examples of a much larger class, called generalized Radon transforms in the recent literature.  

See the articles \cite{PhongStein} by Phong and Stein and more recently \cite{Schlag} by Schlag for some results and an excellent discussion of a larger class of operator defined by integration on curved hypersurfaces.   One motivation for studying these Radon-like transforms comes from partial differential equations.  For example, in 3 dimensions the solution at fixed times to the initial value problem for the wave equation is solved by convolution with surface measure on the sphere.  

At the other dimensional extreme from hypersurfaces, Tao and Wright in \cite{TW} have proved $L^p \to L^q$ bounds near the endpoint for operators defined by integration over one-dimensional curves; Christ has reproved their result in \cite{ChRL} using similar techniques.  The article by Tao and Wright in particular contains an extensive bibliography which may be of interest to the reader. 

Between these extremes, the $L^p \to L^q$ bounds are still largely unknown, but curvature is still important; see \cite{CNSW}.

A more general discussion of the role curvature plays in harmonic analysis may be found, for instance, in the two expository articles, \cite{SteinWainger} and \cite{Iosevich}.

\section{Basic quasi-extremal pairs of sets} \label{basicqexsets}

In this section, we will prove that the basic quasi-extremal pairs of sets are in fact quasi-extremal.

\begin{prop} \label{prop:EFqex}
  Let $1 >\rho>0$ and suppose that $r_1,\ldots,r_{d-1}$ satisfy inequalities
  (\ref{rismall}-\ref{rinondeg}).  Then the pair $(E(\bfr;\rho),F(\bfr;\rho))$, which was defined in \S\ref{prelimrmks}, is quasi-extremal.
\end{prop}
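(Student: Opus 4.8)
The plan is to prove $\scriptt(E,F)\gtrsim\rho^{d}$, where $\scriptt(E,F):=\scriptt\bigl(E(\bfr;\rho),F(\bfr;\rho)\bigr)$; this suffices thanks to the volume bounds $|E(\bfr;\rho)|\lesssim\rho\,r_{1}\cdots r_{d-1}$ and $|F(\bfr;\rho)|\lesssim\rho^{d}/(r_{1}\cdots r_{d-1})$, which give $|E|^{d/(d+1)}|F|^{d/(d+1)}\lesssim\rho^{d}$. These bounds follow by slicing $E$ by the level sets of $x\mapsto|x+e_{d}|$ and $F$ by those of $y\mapsto|y|$: for fixed $x'$ the $x_{d}$-fibre of $E$ has length $\lesssim\rho\,(1-|x'|^{2})^{-1/2}$, and after the anisotropic rescaling $x_{i}\mapsto x_{i}/r_{i}$ one has
\[
\int_{\prod_{i}(-r_{i},r_{i})}\frac{dx'}{\sqrt{1-|x'|^{2}}}\lesssim r_{1}\cdots r_{d-1}
\]
since every $r_{i}\le 1$ (and the shell $|x'|\approx 1$, where the fibre is slightly longer, contributes a negligible amount).

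For the lower bound I would start from
\[
\scriptt(E,F)=\int_{S^{d-1}}|(E+\omega)\cap F|\,d\sigma(\omega),
\]
and, as in \cite{QEx}, parametrise a hemisphere of $S^{d-1}$ by its foot point $\omega'$, turning $d\sigma$ into $(1-|\omega'|^{2})^{-1/2}\,d\omega'$ and making quantities such as $\Fs{x'}$ appear explicitly in the geometry. The goal is then to exhibit a set $\Omega\subseteq S^{d-1}$ of shifts and a number $V>0$ with
\[
\sigma(\Omega)\cdot V\gtrsim\rho^{d}\qquad\text{and}\qquad|(E+\omega)\cap F|\ge V\ \text{ for every }\omega\in\Omega.
\]
Geometrically $E$ and $F$ are $\rho$-neighbourhoods of two ``dual'' ellipsoidal caps lying on the unit spheres centred at $-e_{d}$ and at $0$; $\Omega$ should be the set of $\omega$ for which the cap carrying $E+\omega$ crosses the cap carrying $F$ transversally and meets the box $\prod_{i}(-\rho/r_{i},\rho/r_{i})$ defining $F$. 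For such $\omega$ the intersection $(E+\omega)\cap F$ contains a box-shaped set whose side lengths one reads off from the two systems of radii and from the crossing angle (which is comparable to $|\omega'|$); this produces an explicit $V$, after which one checks $\sigma(\Omega)\,V\gtrsim\rho^{d}$.

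I expect that last verification to be the main obstacle, and it is the one point at which the sphere genuinely differs from the paraboloid of \cite{QEx}. The paraboloid has a product structure, so its coordinate directions decouple; the curvature of the sphere couples them, so the sagittae of the two caps over the relevant boxes and the crossing angles all have to stay in the range where the paraboloid-type estimates of \cite{QEx} still work. That is exactly what condition $(\ref{rinondeg})$ — together with $\rho\le r_{i}\le 1$ — provides: it prevents the $r_{i}$ from being so spread out (some $\sim 1$, some $\sim\rho$) that the curvature-induced coupling destroys the bound, which is precisely the degeneracy exhibited by the non-admissible example in \S\ref{sec:context}. I would carry out the verification regime by regime — all $r_{i}$ comparable to $1$; the Knapp case $r_{i}\sim\rho^{1/2}$; the case $r_{i}\sim\rho$; and the intermediate situations — choosing $\Omega$ and $V$ explicitly in each, with $(\ref{rinondeg})$ furnishing the uniform estimate that links the regimes, and referring to \cite{QEx} wherever the computation is identical to the paraboloid case.
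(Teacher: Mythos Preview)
Your outline identifies the right target, $\scriptt(E,F)\gtrsim\rho^{d}$, and correctly isolates condition~(\ref{rinondeg}) as the crux, but it stops short of the actual verification --- the ``regime by regime'' computation you defer is the entire content of the proof. More importantly, the paper's approach is dual to yours and considerably cleaner to execute. Rather than fixing $\omega\in S^{d-1}$ and estimating the volume $|(E+\omega)\cap F|$ --- which requires understanding the intersection of two thickened ellipsoidal caps on distinct spheres --- the paper fixes a point $x$ in a slightly shrunk copy $E(c\bfr;c\rho)$ and bounds $T\chi_{F}(x)$ from below by exhibiting an explicit box of foot-points
\[
\scriptf_{0}(x)=\Bigl\{s\in\reals^{d-1}:|s_{i}-x_{i}|<c\,\tfrac{\rho}{r_{i}}\Bigr\},
\]
and checking that $s\in\scriptf_{0}(x)$ forces $x-(s,\sqrt{1-|s|^{2}})\in F$. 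This reduces the whole problem to a single scalar inequality,
\[
\bigl|x_{d}-\sqrt{1-|s|^{2}}+\sqrt{1-|x'-s|^{2}}\bigr|\le\rho,
\]
rather than a volume computation.

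The case split is also tighter than your list of regimes. One only needs three cases: all $r_{i}\le\rho^{1/2}$, all $r_{i}\ge\rho^{1/2}$, or the mixed case $r_{k}\le\rho^{1/2}\le r_{k+1}$ for some~$k$. In the mixed case, after separating $t=(s_{1},\dots,s_{k})$ and $y=(x_{k+1},\dots,x_{d-1})$, a two-term Taylor expansion reduces the inequality above to controlling $\tfrac{|y|^{2}}{2}\bigl(\tfrac{1}{\sqrt{1-|t|^{2}}}-1\bigr)$, and condition~(\ref{rinondeg}) --- in the equivalent form ``either $r_{1}\ge\rho^{3/4}$ or $r_{d-1}\le\rho^{1/4}$'' --- finishes it directly. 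Your transversal-crossing picture would eventually recover this same inequality, but only after more geometric bookkeeping; the pointwise approach gets there in one step.
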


Because the operator $T$ commutes with translations and rotations, we obtain as a corollary:
\begin{cor}
If $R \in O(d)$ is a rotation, $x \in \reals^d$, and $\rho>0$ and $r_1,\ldots,r_{d-1}>0$ satisfy (\ref{rismall}-\ref{rinondeg}), then $(RE(\bfr;\rho)+\{x\}, RF(\bfr;\rho)+\{x\})$ is a quasi-extremal pair.
\end{cor}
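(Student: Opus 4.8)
The plan is to reduce the bound $\scriptt(E,F)\gtrsim|E|^{d/(d+1)}|F|^{d/(d+1)}$ to a single quantitative statement about two nearly tangent spheres, and then to recognize (\ref{rinondeg}) as exactly the hypothesis that makes that statement hold, with all constants depending only on $d$.

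First I would record that $|E|\sim\rho\prod_i r_i$ and $|F|\sim\rho\prod_i(\rho/r_i)=\rho^d/\prod_i r_i$; this is a direct computation, the only mildly delicate point being the ``corner'' where the box $\prod_i\{|x_i|<r_i\}$ reaches the equator of the sphere centered at $-e_d$, which contributes a negligible amount. Since $\prod_i r_i$ then cancels from $|E|^{d/(d+1)}|F|^{d/(d+1)}\sim\rho^d$, it suffices to prove $\scriptt(E,F)=\iint\chi_E(x-\omega)\chi_F(x)\,dx\,d\sigma(\omega)\gtrsim\rho^d$. (Here (\ref{rismall}) and (\ref{riinc}) are used only to keep $\rho/r_i\le1$ and to fix the ordering of the radii.) To bound the integral from below I would write $\omega=(\omega',-\sqrt{1-|\omega'|^2})$ — restricting $\omega$ to the lower hemisphere costs nothing — and integrate over the explicit region $|x_i|<\rho/r_i$, $|x'|<\tfrac12$, $|x_i-\omega_i|<\lambda r_i$, where $\lambda=\lambda(d)$ is a small constant: the truncation of $x'$ and the shrinking of the $E$-box by $\lambda$ cost only dimensional factors, but they keep $|x'|$, $|\omega'|$, $|x'-\omega'|$ bounded away from $1$. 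One then checks that each of the conditions $x\in F$ and $x-\omega\in E$ confines $x_d$ to an interval of length $\sim\rho$, and that the centers of these two intervals differ by $g(x',\omega')+O(\rho)$, where
\[
  g(a,b):=-1+\sqrt{1-|a-b|^2}+\sqrt{1-|a|^2}-\sqrt{1-|b|^2};
\]
geometrically, $g$ measures the vertical gap between $S^{d-1}$ and the translate by $x$ of the spherical cap defining $E$. Hence the two $x_d$-intervals overlap in a set of length $\gtrsim\rho$ as soon as $|g(x',\omega')|\le c\rho$ for a suitable $c=c(d)$, and granting that,
\[
  \scriptt(E,F)\ \gtrsim\ \rho\cdot\bigl|\{(x',\omega'):\ |x_i|<\tfrac{\rho}{r_i},\ |x'|<\tfrac12,\ |x_i-\omega_i|<\lambda r_i\}\bigr|\ \gtrsim\ \rho\cdot\rho^{d-1}=\rho^d,
\]
the last inequality because (up to dimensional factors, and using $\rho\le r_i$) the region has measure $\prod_i(\rho/r_i)\cdot\prod_i r_i=\rho^{d-1}$.

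The crux is therefore the estimate $|g(x',\omega')|\le c\rho$ on that region, and this is where (\ref{rinondeg}) is genuinely used. Expanding (all the radicals now have arguments bounded away from $1$), one obtains $g(a,b)=-a\cdot(a-b)+R$ with
\[
  |R|\ \lesssim\ |a\cdot(a-b)|\,(|a|^2+|b|^2)\ +\ |a-b|^2\,|a|^2\ +\ |a-b|^2\,|a\cdot b|.
\]
The linear term is harmless by itself: $|a\cdot(a-b)|\le\sum_i|a_i|\,|a_i-\omega_i|<\sum_i(\rho/r_i)(\lambda r_i)=\lambda(d-1)\rho$, which uses only that the $i$th radius of the $F$-box times the $i$th radius of the (unshrunk) $E$-box equals $\rho$. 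Since $\rho\le r_i\le1$ forces $|a|^2,|b|^2\lesssim1$, the first term of $R$ is $\lesssim\lambda\rho$. For the remaining two terms, (\ref{rinondeg}) — equivalently $r_i^2\ge r_1^2\ge\rho r_{d-1}^2$ — gives $|a|^2\le\sum_i(\rho/r_i)^2\lesssim\rho^2/r_1^2\le\rho/r_{d-1}^2$ and $|a\cdot b|\le\sum_i|a_i|\,|b_i|\lesssim\rho/r_{d-1}^2$, while $|a-b|^2\le\sum_i(\lambda r_i)^2\lesssim\lambda^2 r_{d-1}^2$; multiplying, each of $|a-b|^2|a|^2$ and $|a-b|^2|a\cdot b|$ is $\lesssim\lambda^2\rho$. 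Hence $|R|\lesssim\lambda\rho$, so $|g|\lesssim\lambda\rho$, and choosing $\lambda$ small enough gives $|g|\le c\rho$. This proves the proposition; the corollary then follows at once, since $T$ commutes with rotations and translations.

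The step I expect to be the real obstacle is not any single estimate but the observation through which (\ref{rinondeg}) enters: the two anisotropic quantities $|a-b|^2$ (governed by the widest $E$-radius $r_{d-1}$) and $|a|^2$ or $|a\cdot b|$ (governed by the widest $F$-radius $\rho/r_1$) can be simultaneously large only in a manner whose product stays $\lesssim\rho$, and this occurs exactly when $r_1\ge\rho^{1/2}r_{d-1}$, so (\ref{rinondeg}) is the precisely correct hypothesis, neither stronger nor weaker than needed. Everything else, including the compactness-driven truncations $|x'|<\tfrac12$ and $|x_i-\omega_i|<\lambda r_i$, has a routine counterpart — or, the change of variables for the paraboloid being globally polynomial, no counterpart at all — in Christ's argument in \cite{QEx}.
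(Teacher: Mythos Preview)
Your argument is correct, and the reduction of the corollary to the proposition via rotation/translation invariance of $T$ is exactly what the paper does.  For the proposition itself, however, your route differs from the paper's in its organization.

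The paper first introduces a threshold index $k$ with $r_k \leq \rho^{1/2} \leq r_{k+1}$, splits the coordinates of $x'$ and $s$ into a ``small'' block (indices $\leq k$) and a ``large'' block (indices $>k$), and uses the relations $r_i \leq \rho^{1/2}$ or $\rho/r_i \leq \rho^{1/2}$ on each block to reduce the key inequality to one involving only two scalar quantities $|y|^2$ and $|t|^2$.  It then invokes (\ref{rinondeg}) a second time, in the form ``either $r_1 \geq \rho^{3/4}$ or $r_{d-1} \leq \rho^{1/4}$,'' to decide which of $|y|$, $|t|$ is small enough to Taylor-expand in.  By contrast, you keep all coordinates together, write down the single gap function $g(a,b)$, expand it once, and observe that after the leading term $-a\cdot(a-b)$ every remaining monomial either carries a factor of $a\cdot(a-b)$ (already $\lesssim\lambda\rho$) or a factor of $|a|^2|a-b|^2$; the inequality $|a|^2|a-b|^2 \lesssim (\rho/r_1)^2\,r_{d-1}^2 \leq \rho$ is then exactly (\ref{rinondeg}).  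Your route avoids all case analysis and makes the role of (\ref{rinondeg}) more transparent as a single multiplicative constraint, while the paper's approach perhaps gives more geometric insight by explicitly separating the ``thin'' and ``fat'' directions.  Either way the content is the same: your bound on $R$ (whose third term $|a-b|^2|a\cdot b|$ is in fact redundant, being dominated by the second) is equivalent to what the paper obtains after its two case splits.
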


\begin{proof}[Proof of proposition \ref{prop:EFqex}]
It suffices to prove the proposition under the additional assumptions that 
\begin{align} \label{ineq:assumptions}
\rho \ll 1 \qquad \rho \ll r_i \ll 1, \, 1 \leq i \leq d-1.
\end{align}
Let $E:=E(\bfr; \rho)$ and $F:=F(\bfr; \rho)$.  Then it is easy to check that 
\begin{align*}
  |E| \sim (\prod_{i=1}^{d-1} r_i) \rho \qquad   |F| \sim (\prod_{i=1}^{d-1} \frac{\rho}{r_i}) \rho,
\end{align*}
so what we must show to verify quasi-extremality of $(E,F)$ is that
\begin{align}\label{ineq:ntsqex}
\scriptt(E,F) \gtrsim \rho^d.
\end{align}

Let $x \in E(c\bfr;c\rho)$, where $c>0$ is sufficiently small (depending only on $d$) for later purposes.  Consider the set 
\[
  \scriptf_0(x) := \{s \in \reals^{d-1}: |s_i-x_i| < c\frac{\rho}{r_i},
  \, 1 \leq i \leq d-1\}.
\]
Then by the smallness of the $r_i$ and $\frac{\rho}{r_i}$,
$|\scriptf_0(x)| \sim \prod_{j=1}^k \frac{\rho}{r_j}$.  We will prove
that 
\begin{align} \label{inc:ntsqex}
s \in \scriptf_0(x) \: \implies \: x-(s,\sqrt{1-|s|^2}) \in F(\mathbf{r},\rho).  
\end{align}
This will imply that $T\chi_F(x) \gtrsim \prod_{j=1}^k \frac{\rho}{r_j}$, from which the estimate (\ref{ineq:ntsqex}) will follow.

First, suppose that $r_{d-1} \leq \rho^{1/2}$.  Let $x\in E(c\bfr;c\rho)$ and $s \in \scriptf_0(x)$.  Then 
\begin{align*}
x_d =  cO(\rho) \quad x_i-s_i = cO(\frac{\rho}{r_i}) \quad x_d-\sqrt{1-|s|^2} = cO(\rho) + \sqrt{1-|x'-s|^2},
\end{align*}
where the second inequality follows from $r_i \leq \rho^{1/2} \leq \frac{\rho}{r_i}$.
Therefore (\ref{inc:ntsqex}) holds.  

Similarly, it is not difficult to show that (\ref{inc:ntsqex}) holds when $r_1 \geq \rho^{1/2}$.  Hence we may assume that there exists an index
$k$, $1 \leq k \leq d-2$ so that
\[
r_k \leq \rho^{1/2} \leq r_{k+1};
\]
recall the monotonicity assumption (\ref{riinc}).

Let $x \in E(c\bfr,c\rho)$ and $s \in \scriptf_0(x)$, and define 
\[
  t = (s_1,\ldots,s_k) \qtrcm and \qtrcm y = (x_{k+1},\ldots,x_{d-1}).
\]

To verify (\ref{inc:ntsqex}), it suffices to show that
\begin{align} \label{ineq:reduction1}
  |x_d-\sqrt{1-|s|^2} + \sqrt{1-|x'-s|^2}| \leq \rho.
\end{align}
We note that $x_d = \sqrt{1-|x'|^2} - 1 + cO(\rho)$ and that by our choice
of $k$,
\begin{align*}
  |x'|^2 = |y|^2 + cO(\rho) \quad |x'-s|^2 = |t|^2 + cO(\rho) \quad |s|^2 = |t|^2 + |y|^2 + cO(\rho).  
\end{align*}
Therefore (\ref{ineq:reduction1}) will follow from 
\begin{align} \label{ineq:reduction2}
  |\sqrt{1-|y|^2} + \sqrt{1-|t|^2} - \sqrt{1-|y|^2-|t|^2} - 1|
   \leq cO(\rho).
\end{align}

By condition (\ref{rinondeg}) on the $r_i$, either $r_1 \geq
\rho^{3/4}$ or $r_{d-1} \leq \rho^{1/4}$.  Assuming the latter, we use
a Taylor series expansion to obtain
\[
  \sqrt{1-|y|^2} = 1-\frac{|y|^2}{2} + cO(\rho)
\]
and
\[
  \sqrt{1-|t|^2-|y|^2} = \sqrt{1-|t|^2}-\frac{|y|^2}{2\sqrt{1-|t|^2}}
  + cO(\rho).
\]
Therefore,
\[
  |\sqrt{1-|y|^2} + \sqrt{1-|t|^2} - \sqrt{1-|y|^2-|t|^2} - 1| = 
  \frac{|y|^2}{2}(\frac{1}{\sqrt{1-|t|^2}}-1) + cO(\rho).
\]
By smallness of the $r_i$ and the $\frac{\rho}{r_i}$, $\sqrt{1-|t|^2}
\sim 1$, so the term on the right is bounded by $c(r_{d-1}^2 \cdot
(\frac{\rho}{r_1})^2 + O(\rho))$.  Inequality (\ref{ineq:reduction2}) follows from another application of condition (\ref{rinondeg}).  

In the other case, $r_1 \geq \rho^{3/4}$, we have that
$\frac{\rho}{r_1} \leq \rho^{1/4}$, and the verification of (\ref{ineq:reduction2}) is the
same as in the previous case, with the roles of $y$ and $t$ reversed.  
\end{proof}


\section{setup for the proof of the main theorem}\label{sec:setup}

Let $(E,F)$ be an $\eps$-quasi-extremal pair of sets.  Using a partition
of unity, we may write $T = \sum_{j=1}^M T_j$, where $T_j$ is
equal to convolution with $a_j \, d\sigma$ and the $T_j$ and $M$ depend on the dimension alone.  Here $d \sigma$ is surface
measure on $S^{d-1}$ and $a_j$ is a smooth function supported on a set
$U_j \subset S^{d-1}$ having diameter $\ll 1$.  By the triangle inequality, $(E,F)$ is $(M^{-1}\eps)$-quasi-extremal for at least one of the
$T_j$.  By means of a rotation, we may assume that $U_j$ is contained in
a small ball centered at $(0,\ldots,0,1)$.  Henceforth, we will write $T
= T_j$, $U = U_j$, and $M^{-1}\eps = \eps$.  Of course, this means, for
instance, that $T$ is no longer self-adjoint.

We define 
\[
  \alpha:= \frac{\scriptt(E,F)}{|E|} \qtrcm \text{and} \qtrcm \beta:=
  \frac{\scriptt(E,F)}{|F|}.
\]
Then $\alpha$ represents the average size of $T\chi_F$ for points in
$E$ and $\beta$ represents the average size of $T^*\chi_E$ for points in
$F$.  See for instance \cite{CCC} for a proof of the following lemma.
\begin{lem}
  For each integer $N \geq 1$, there exists $x_0 \in E$ and measurable
  sets $\Omega_i \subset \reals^{i(d-1)}$, $1 \leq i \leq N$ such that $|\Omega_1| \gtrsim \alpha$, $\Omega_i \subset \Omega_{i-1} \times \reals^{d-1}$, for $2 \leq i \leq N$  and such that whenever $t = (t_1,\ldots,t_i) \in \Omega_i$, $1 \leq i   \leq N$, 
  \begin{align*}
    |\{s \in \reals^{d-1}:(t,s) \in \Omega_{i+1}\}| \gtrsim 
    \begin{cases}
      \beta,\,\text{ if $i<N$ is odd}\\
      \alpha,\,\text{ if $i < N$ is even}
    \end{cases}\\
    x+ \sum_{j = 1}^i (-1)^j(t_j,\sqrt{1-|t|^2}) \in 
    \begin{cases}
      F,\,\text{ if $i$ is odd}\\
      E,\,\text{ if $i$ is even.}
    \end{cases}
  \end{align*}
  Here the implicit constants depend on $N$ and $d$.
\end{lem}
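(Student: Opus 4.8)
The plan is to iterate a ``population control'' or ``refinement'' argument, the kind that appears in work of Christ and of Tao--Wright on generalized Radon transforms, and which is surely what \cite{CCC} proves. The starting observation is that since $\scriptt(E,F) = \int_E T\chi_F$, the set $E_1 := \{x \in E : T\chi_F(x) \geq \tfrac12 \alpha\}$ carries at least half the mass of $\scriptt(E,F)$; in particular $|E_1| \gtrsim \scriptt(E,F)/\|T\chi_F\|_\infty \gtrsim \alpha$ after noting $\|T\chi_F\|_\infty \lesssim 1$, and more importantly for every $x \in E_1$ we have $|\{t \in \reals^{d-1} : x - (t,\sqrt{1-|t|^2}) \in F\}| \gtrsim \beta$ (using that the density $a_j\,d\sigma$ pushes forward to something comparable to Lebesgue measure on the relevant coordinate patch near the north pole, where $U$ is supported). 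Running the same argument for $T^*$ on the set just produced gives the next generation, and so on. This is the engine; the lemma is just the assertion that the engine can be cranked $N$ times and that one can record the nested data $\Omega_i$ describing the successful ``paths.''

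The key steps, in order, are as follows. First, fix the coordinate patch: since $U$ has diameter $\ll 1$ and sits near $e_d$, membership $\omega \in S^{d-1} \cap U$ is parametrized by $\omega = (t,\sqrt{1-|t|^2})$ with $t$ in a small ball of $\reals^{d-1}$, and under this parametrization $a_j\,d\sigma$ has a smooth density bounded above and below, so all integrals $\int f\,a_j\,d\sigma$ are comparable to $\int_{|t|\ll 1} f(t,\sqrt{1-|t|^2})\,dt$. Second, set up the iteration abstractly: given a set $G$ (playing the role of $E$ or $F$) with $\int_G T^{(*)}\chi_H \geq \delta |G|$-type information inherited from the previous stage, pass to the subset where $T^{(*)}\chi_H$ is at least half its average, losing at most a constant factor in measure and gaining the pointwise lower bound on the fiber measure. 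Third, and this is the only slightly fussy bookkeeping point, define $\Omega_1 := \{t_1 : x_0 - (t_1,\sqrt{1-|t_1|^2}) \in F\} \cap (\text{small ball})$ for a well-chosen $x_0$, and inductively $\Omega_{i+1} := \{(t,s) : t \in \Omega_i,\ s \in \reals^{d-1},\ x_0 + \sum_{j=1}^{i}(-1)^j(t_j,\sqrt{1-|t_j|^2}) \pm (s,\sqrt{1-|s|^2}) \in E \text{ or } F \text{ as appropriate}\}$, after first choosing $x_0 \in E$ for which all $N$ fiber inequalities succeed simultaneously --- such an $x_0$ exists because at each stage the bad set (where some fiber is too small) has been arranged to have small measure, so a pigeonhole / measure-subtraction over the finitely many ($N$) stages leaves a positive-measure set of good $x_0$, of which we pick one. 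The containments $\Omega_i \subset \Omega_{i-1}\times\reals^{d-1}$ and the fiber-size lower bounds are then immediate from the construction, and the point constraint $x_0 + \sum_{j=1}^{i}(-1)^j(t_j,\sqrt{1-|t|^2}) \in F$ (odd $i$) or $E$ (even $i$) is built in.

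I expect the main obstacle to be the choice of $x_0$ and the precise accounting that guarantees all $N$ nested fiber bounds hold for a common basepoint: naively, refining ``where $T\chi_F$ is large'' at stage $i$ shrinks $E$, and one must make sure the shrinkage is by a controlled constant (depending on $N$ and $d$, as the statement allows) so that after $N$ refinements one still has a nonempty --- indeed positive-measure --- residual set from which to select $x_0$. The clean way to handle this is to run the refinement ``from the inside out'': first decide how much loss is acceptable at the innermost stage $N$, propagate the required measure lower bounds backward to stage $1$ via the elementary inequality $|\{x : u(x) \geq c\,\mathrm{avg}_G u\}| \geq (1-c)|G|$ applied with $c$ a small constant raised to a power depending on $N$, and only then select $x_0$ from the stage-$1$ residual. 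Everything else --- the parametrization, the pushforward comparison, the defining formulas for $\Omega_i$ --- is routine, and where it overlaps with \cite{CCC} I would simply cite that reference for the details.
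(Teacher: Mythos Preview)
The paper does not actually prove this lemma; it simply refers the reader to \cite{CCC}. Your sketch is the standard refinement/tower argument and is essentially what one finds in that reference, so you are aligned with the paper's (outsourced) approach.

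Two minor slips worth flagging. First, for $x \in E_1$ the fiber $\{t : x - (t,\sqrt{1-|t|^2}) \in F\}$ has measure $\gtrsim \alpha$, not $\gtrsim \beta$, since it is $T\chi_F(x) \geq \tfrac12\alpha$ that you have arranged; this set is exactly what becomes $\Omega_1$ once $x_0$ is chosen, consistent with the lemma's conclusion $|\Omega_1| \gtrsim \alpha$. The $\beta$ appears only at the next stage, when you refine $F$ according to where $T^*\chi_E$ is large. Second, the chain $|E_1| \gtrsim \scriptt(E,F)/\|T\chi_F\|_\infty \gtrsim \alpha$ is not quite right: from $\|T\chi_F\|_\infty \lesssim 1$ you get $|E_1| \gtrsim \scriptt(E,F) = \alpha|E|$, which is not $\gtrsim \alpha$ absent a lower bound on $|E|$. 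This is harmless, since all you need from $E_1$ is that it has positive measure so that an $x_0$ can be selected; the bound $|\Omega_1| \gtrsim \alpha$ then comes from $T\chi_F(x_0) \gtrsim \alpha$, as you correctly indicate elsewhere. With these corrections your outline is sound.
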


Henceforth we will assume that $N$ is fixed and sufficiently large (say
5 or so) and that $x_0$ and $\Omega_1,\Omega_2,\ldots$ satisfy the
conclusions of the lemma.

\section{The Shape of $\Omega_1$.}\label{sec:Omega1}
Techniques used in this section have previously appeared in the work \cite{QEx} by Christ and are similar to arguments used by Schlag in \cite{Schlag}.

\subsection{Inflation bound.}\label{sec:inflation}
Let $s \in \reals^{d-1}$ and $(t_1,\ldots,t_{d-1}) = \olt \in
\reals^{(d-1)(d-1)}$ with $|s|,|t_i| \ll 1$, $1 \leq i \leq d-1$.  We
define $\Psi^{\natural}:\reals^{(d-1)d} \to \reals^{d(d-1)}$ by
\[
  [\Psi^{\natural}(s,\olt)]_j:=(t_j-s,\sqrt{1-|t_j|^2}-\sqrt{1-|s|^2}),
\]
where we are writing $\bfx \in \reals^{d(d-1)}$ as $\bfx = ([x]_1,\ldots,[x]_{d-1})$ with $[x]_j \in \reals^d$.
We may compute
\[
  \det D \Psi^{\natural} = \det(F_s(t_1),\ldots,F_s(t_{d-1})),
\]
where for $s,t \in \reals^{d-1}$ with $|s|,|t| <1$,
\[
  F_s(t):= \frac{t}{\sqrt{1-|t|^2}}-\frac{s}{\sqrt{1-|s|^2}}.
\]
See the end of this section for a few remarks concerning this
function.

We would like to obtain a lower bound for $|E|$.  For $s \in
\Omega_1$, $|s| \ll 1$, let 
\[
  \scriptf(s):= \{t \in \reals^{d-1}:|t| \ll 1 \qtrcm \text{and}
  \qtrcm (s,t) \in \Omega_2\}.
\]
If we define
\[
  \Omega^{\natural} := \{(s,\olt) \in \reals^{d-1} \times
  \reals^{(d-1)(d-1)}: s \in \Omega_1, \qtrcm \olt \in
  (\scriptf(s))^{d-1}\},
\]
then $\Psi^{\natural}(\Omega^{\natural}) \subset E^{d-1}$.  

Write $\omega_0:=(s,\sqrt{1-|s|^2})$ and $\omega_i:=
(t_i,\sqrt{1-|t_i|^2})$, $1 \leq i \leq d-1$.  Then for each $\bfx =
(x_1,\ldots,x_{d-1}) \in (\reals^d)^{d-1}$, the pre-image of $\bfx$
under $\Psi^{\natural}$ has cardinality
\begin{align*}
  \#\{(s,\olt) &: \Psi^{\natural}(s,\olt) = \bfx\}  \\
  &\leq\# \{(\omega_0,\omega_1,\ldots,\omega_{d-1} \in (S^{d-1})^d :\omega_0
  =  x_i + \omega_i, \qtrcm 1 \leq i \leq d-1\}\\
  & \leq \prod_{j=0}^{d-1}\#\bigcap_{i=0}^{d-1}(S^{d-1}+\{x_i-x_j\}) =
  [\#\bigcap_{i = 0}^{d-1}(S^{d-1}+\{x_i\})]^{d-1},
\end{align*}
where $x_0:=0$.

By Bezout's Theorem (see \cite{Shaf}, for instance), for $\bfx$ lying off of a
measure-zero (indeed, algebraic) subset of $\reals^{d(d-1)}$, the right side is bounded by a
constant depending only on $d$.  An elementary proof of this fact is
also possible; one can take advantage of the fact that the intersection
of two spheres, neither a subset of the other, is either empty or a
lower dimensional sphere.

From the cardinality bound established above, we have the estimate
\[
  |E|^{d-1} \gtrsim \int_{\Omega_1} \int_{\scriptf(s)^{d-1}}
   |\det(F_s(t_1),\ldots,F_s(t_{d-1}))|d\olt \, ds.
\]

\subsection{Approximation by convex sets.}
  We will use an argument from \cite{QEx} to obtain a lower bound for this
term and to describe a typical set $\scriptf(s)$; by symmetry
a similar description will apply to $\Omega_1$.

A set $V \subset \reals^{d-1}$ is {\bf balanced} if $-V=V$.

\begin{lem}
Let $\eta>0$.  Then for any Lebesgue measurable set $A \subset \reals^{d-1}$
with $0 < |A| <\infty$, there exists a bounded, balanced convex set $V
\subset \reals^{d-1}$ such that whenever $V' \subset V$ is a balanced
convex set with $|V'| \leq \frac{1}{2}|V|$, then
\[
  |A \cap (V\backslash V')| \gtrsim \left(\frac{|A|}{|V|}\right)^{\eta}
   |A|.
\]
Moreover,
\[
  \int_{A^{d-1}}|\det(\olu)|d\olu \gtrsim
  |V||A|^{d-1} (\frac{|A|}{|V|})^{\eta(d-1)}
\]
Here the implicit constants depend only on $\eta$ and $d$.
\end{lem}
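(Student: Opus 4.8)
The plan is to build the set $V$ by an iterative selection process, guided by the idea that we want $V$ to be a convex body on which $A$ is ``uniformly spread'' at every scale down through a factor of $2$. First I would normalize: replacing $A$ by $A \cup (-A)$ costs at most a factor of $2$ in the relevant densities and makes $A$ balanced, which is harmless. Now consider the collection of all bounded balanced convex sets $W$ with $|A \cap W| > \tfrac12 |W| \cdot (|A \cap W|/|W|)^{\text{something}}$—more precisely, I want to run a stopping-time argument. Start with $V_0$ a large ball containing essentially all of $A$ (or, if $|A| = \infty$ relative to no bounded set, first observe that we may replace $A$ by a bounded subset of comparable measure since $|A| < \infty$). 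Given $V_k$, ask whether there is a balanced convex $V' \subset V_k$ with $|V'| \le \tfrac12 |V_k|$ but $|A \cap (V_k \setminus V')| < \delta (|A \cap V_k|/|V_k|)^\eta |A \cap V_k|$ for the constant $\delta$ we are trying to achieve; if so, set $V_{k+1} = V'$ and iterate; if not, stop and output $V = V_k$. The point is that when we pass from $V_k$ to $V_{k+1}$, the measure $|A \cap V_k|$ drops by at most a $\delta(\cdots)^\eta$-fraction relative to itself while $|V_k|$ drops by at least a factor of $2$; so the \emph{density} $|A \cap V_k|/|V_k|$ at least doubles (up to the small multiplicative correction), which means after finitely many steps the density would exceed $1$—impossible. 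Hence the process terminates, and the terminal $V$ has exactly the stated ``no thin convex subset captures too little of $A$'' property, with the implicit constant $\delta$ depending only on $\eta$ and $d$ once we track the geometric-series bookkeeping. This is the argument from \cite{QEx} and I would cite it for the details of the constant-chasing.

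For the determinant bound, the strategy is to exploit exactly the non-degeneracy we just built into $V$. By John's theorem (or a direct affine change of variables), $V$ contains an ellipsoid of volume $\gtrsim_d |V|$ and is contained in a constant dilate of it; after an affine map $L$ with $|\det L| \sim |V|$ we may assume $V$ is comparable to the unit ball $B$ and $|A \cap B| \gtrsim |A|$ (losing only dimensional constants, and noting $\det$ transforms by $(\det L)^{d-1}$ under the substitution $\olu \mapsto L\olu$ in $\int_{A^{d-1}}$). Now I want to choose $u_1, \dots, u_{d-1} \in A \cap B$ one at a time so that $|\det(u_1,\dots,u_{d-1})| \gtrsim 1$. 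Having chosen $u_1, \dots, u_{j}$ spanning a $j$-dimensional subspace $H_j$, the set of $u \in \reals^{d-1}$ within distance $\tau$ of $H_j$ is (intersected with $B$) a slab of volume $\lesssim \tau$; it is contained in a balanced convex set—a slab—of small volume, so by the defining property of $V$, $A \cap B$ meets the \emph{complement} of that slab in measure $\gtrsim (|A|/|V|)^\eta |A \cap B|$, in particular non-trivially, provided $\tau$ is a small enough dimensional constant. Pick $u_{j+1}$ there; then $\dist(u_{j+1}, H_j) \gtrsim \tau$, so the $j{+}1$-volume of the parallelepiped grows by a definite factor. Iterating $d-1$ times produces $u_1,\dots,u_{d-1}$ with $|\det| \gtrsim 1$, and in fact the \emph{measure} of $(d-1)$-tuples in $(A \cap B)^{d-1}$ with $|\det| \gtrsim 1$ is $\gtrsim \bigl((|A|/|V|)^\eta |A \cap B|\bigr)^{d-1}$, since at each stage we had that much freedom. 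Undoing the affine normalization multiplies the determinant estimate by $|\det L|^{d-1} \sim |V|^{d-1}$ and the measure of the parameter set by $|\det L|^{d-1}$ as well, but these combine correctly: one gets $\int_{A^{d-1}} |\det \olu|\, d\olu \gtrsim |V| \cdot |A|^{d-1} (|A|/|V|)^{\eta(d-1)}$ after collecting the powers of $|V|$ and $|A \cap B| \sim |A|$. (Here $|V|^{d-1}$ from the volume-of-parameter-domain combines with one power $|V|$ from the determinant scaling to give the stated $|V|$, after dividing—this is the one bit of bookkeeping to do carefully.)

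The main obstacle I anticipate is making the constants in the stopping-time argument genuinely uniform and getting the \emph{same} set $V$ to serve both conclusions simultaneously with a single implicit constant; in particular one must choose the threshold $\delta$ in the iteration small enough that the geometric decay of $|A \cap V_k|$ is dominated by the geometric growth of the density, uniformly in how many steps occur, and this requires $\delta$ to depend on $\eta$ (smaller $\eta$ forces a more delicate argument since $(|A|/|V_k|)^\eta \to 1$). A secondary technical point is the reduction to bounded $A$ and the initial choice of $V_0$: one should first intersect $A$ with a large ball capturing, say, half its mass, and note that any balanced convex $V$ produced for this truncation also works for $A$ itself up to a factor of $2$. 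I would handle both by following \cite{QEx} closely, since the paraboloid case of this lemma is identical—the lemma as stated here is purely a statement about Lebesgue measure in $\reals^{d-1}$ and involves no geometry of the sphere, so it transfers verbatim.
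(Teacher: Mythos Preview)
Your approach matches the paper's: a stopping-time iteration for the first conclusion, then the identity $|\det(\olu)| = \prod_j \dist(u_j,\operatorname{span}(u_1,\dots,u_{j-1}))$ combined with the defining property of $V$ (applied to slabs around the successive spans) for the second. Two small corrections: symmetrizing $A$ to $A\cup(-A)$ is unnecessary (only $V,V'$ need be balanced), and your scaling in the last paragraph is off---under $u_i=Lw_i$ the determinant picks up a single factor $|\det L|$ while $d\olu$ picks up $|\det L|^{d-1}$, so the total is $|\det L|^d\sim |V|^d$, and in the normalized picture $|A'\cap B|\sim |A|/|V|$, not $|A|$; with these fixes the powers collect exactly to $|V|\,|A|^{d-1}(|A|/|V|)^{\eta(d-1)}$.
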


Note that as a consequence of the lemma, the convex set $V$ satisfies
$|V| \gtrsim |A|$.  

The proof is contained in two somewhat more general lemmas in
\cite{QEx}.  We only present a sketch of the argument here.  The first
part is proved via a stopping-time procedure.  We start with a large
balanced convex set $V$ having size $2^m|A|$ so that $|V \cap A| \geq
\frac{3}{4}|A|$; if $V$ satisfies the conclusion of the lemma, we are
done, otherwise, there is a bad set $V' \subset V$, with which we
replace $V$.  The main trick in this portion of the proof is to use
$\eta$ to show that this procedure terminates before $|V|$ reaches
$|A|$.  The proof of the second part of the lemma uses the defining
property of $V$ and the identity
\[
  |\det(\olu)| = \prod_{i=1}^{d-1} \dist(u_i,V_{i-1}),
\]
where $V_0 = \{0\}$ and $V_i = {\rm{span}}(u_1,\ldots,u_i)$ to bound $\olu$
away from the sets where $\det(\olu)$ vanishes.

We apply the lemma to the set $F_s(\scriptf(s))$, for the moment
leaving $\eta$ undetermined, to obtain a balanced
convex set $V(s) \subset \reals^{d-1}$.  Because near zero, $F_s$ is
a diffeomorphism with bounded differential, we have that
\[
  |V(s)| \gtrsim |F_s(\scriptf(s))| \sim |\scriptf(s)| \sim \beta.
\]

Hence, by making the change of variables $u_i = F_s(t_i)$, $1 \leq i \leq d-1$, we see that
\begin{align*}
  \int_{\Omega^{\natural}}|\det(F_s(t_1),\ldots,F_s(t_{d-1}))|d\olt \, ds
  &\gtrsim \int_{\Omega_1}\int_{F_s(\scriptf(s))^{d-1}}|\det(\olu)|
  d\olu \, ds \\
  & \gtrsim \int_{\Omega_1}|V(s)|^{1-(d-1)\eta}|\scriptf(s)|^{(d-1)(1+\eta)}.
\end{align*}
Assuming that $\eta < \frac{1}{d-1}$, this implies that
\[
  |E|^{d-1} \gtrsim \alpha \beta^d.
\]
From the definitions of $\alpha$ and $\beta$, this immediately yields
the (well-known) bound
\[
  \langle T \chi_E,\chi_F \rangle \lesssim |E|^{d/(d+1)}
  |F|^{d/(d+1)}.
\]

On the other hand, we are assuming that
\[
  \langle T\chi_E,\chi_F\rangle \geq \eps |E|^{d/(d+1)}|F|^{d/(d+1)},
\]
which implies that
\begin{align} \label{ineq:ubE}
  |E|^{d-1} \lesssim \eps^{-(d+1)} \beta^d \alpha.
\end{align}
Since the series of inequalities above imply that
\[
|E|^{d-1} \gtrsim \int_{\Omega_1}
\beta^{(d-1)(1+\eta)}|V(s)|^{1-(d-1)\eta}ds,
\]
the above upper bound on $|E|$ means that for most (in particular,
at least one) of the $s \in \Omega_1$, we must have $|V(s)| \lesssim \eps^{-C} \beta$.  

The above argument (with minor changes) enables us to assume
that for some $\tau \in \reals^n$ with $|\tau| \ll 1$, $F_{\tau}(\Omega_1)$ is
contained in a balanced convex set $V$ of size $\lesssim \eps^{-C}
\alpha$.  By John's theorem, we may assume that $V$ is actually an ellipsoid.

\subsection{Comments on $F_s$}.  
  If we let $g(t) = \sqrt{1-|t|^2}$ for $|t| < 1$, then the mapping
$F_s$ is equal to 
\[
  \nabla g - \nabla g(s).
\]
All of the material in \S\S \ref{sec:Omega1},\ref{sec:scriptfs} is
applicable to any sufficiently nice mapping $g$ with $\nabla g(0) = 0$ and $\det D^2 g(0)
\neq 0$.  In the case of \S \ref{sec:Omega1}, this generality was
pointed out to the author by M. Christ, and in the case of \S
\ref{sec:scriptfs} it can be obtained from his work in \cite{QEx} together
with some additional details in the following section.

For two choices of $g$, $F_s$ has a
particularly nice form.

When $T = T_P$, the operator mentioned in \S\ref{sec:context}, $g(s) = |s|^2$.  In this case, of
course, $F_s(t) = t-s$ for all $s$ and $t$.  Therefore $F_{\tau}(\Omega_1)$ is contained in a
balanced convex set if and only if the vertical projection of $\Omega_1$
to $\reals^{d-1}$ is contained in a convex set which is balanced with
respect to $\tau$.  

When $g(s) = \sqrt{1-|s|^2}$, $\nabla g$ may be thought of as the function which maps a point $\omega \in S^{d-1}$ to the point on $\reals^{d-1} \times 0$ which is collinear with 0 and $\omega$.  This function maps great circles to straight
lines, so $F_{\tau}(\Omega_1)$ is contained in a convex
set if and only if $\Omega_1$ is contained in a geodesically convex
subset of the upper hemisphere.  The point $(\tau,\sqrt{1-|\tau|^2})$
would be contained in this convex set, but the set would not be balanced
about that point in any natural (rotationally invariant) sense.

\section{The shape of $\scriptf(s)$}\label{sec:scriptfs}

\subsection{Slicing Bound.}\label{sec:slicing}
  Let $\Phi$ be defined by 
\[
  \Phi(s,t) = (t-s,\sqrt{1-|t|^2}-\sqrt{1-|s|^2}),
\]
for $s,t \in \reals^{d-1}$ with $|s|,|t|<1$; then $\Phi(\Omega_2)
\subset E$.  Let $B$ be the unit ball in $\reals^{d-1}$, and let $A$ be
a positive definite symmetric linear transformation having norm $\ll 1$.

In this subsection we prove the following lemma.

\begin{lem} \label{lemma:slicing}
If $F_{\tau}(\Omega_1) \subset A(B)$, where $|\tau| \ll 1$, then 
\[
  |E| \gtrsim |\det A|^{-1}\int_{\Omega} |A\,(DF_{\tau}(s))^{-1}F_s(t)|dt\,ds.
\]
\end{lem}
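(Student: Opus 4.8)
The plan is to prove Lemma \ref{lemma:slicing} by a change of variables in the $\Phi$-map, just as the inflation bound in \S\ref{sec:inflation} was proved via the $\Psi^\natural$-map. Since $\Phi(\Omega_2) \subset E$, we have $|E| \geq \int_{\Phi(\Omega_2)} 1 \, dx$, and pulling this back through $\Phi$ gives
\[
|E| \gtrsim \int_{\Omega_2} |\det D\Phi(s,t)| \, dt \, ds,
\]
provided we can control the multiplicity of $\Phi$. The multiplicity bound is the analogue of the Bezout-type argument in \S\ref{sec:inflation}: for $x$ off a measure-zero set, the number of preimages $(s,t)$ with $\Phi(s,t)=x$ is bounded by a dimensional constant, because $(s,\sqrt{1-|s|^2})$ and $(t,\sqrt{1-|t|^2})$ are then forced to lie in $S^{d-1} \cap (S^{d-1}+\{x\})$, a lower-dimensional sphere (or empty) unless $x=0$. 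So the first step is to record this multiplicity bound, then the change-of-variables estimate above.

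Next I would compute $\det D\Phi$. Writing $\Phi(s,t) = \bigl(t-s,\ g(t)-g(s)\bigr)$ with $g(u)=\sqrt{1-|u|^2}$, the Jacobian in the $(d-1)+(d-1)$ variables factors: differentiating in $t$ gives the block $\bigl(\mathrm{Id},\ \nabla g(t)\bigr)$ and in $s$ the block $\bigl(-\mathrm{Id},\ -\nabla g(s)\bigr)$. A row-reduction (subtract the $t$-block from the $s$-block, or conversely) shows
\[
|\det D\Phi(s,t)| = |\det\bigl(\nabla g(t) - \nabla g(s)\bigr)| = |\det F_s(t)|,
\]
interpreting $F_s(t) = \nabla g(t) - \nabla g(s)$ as a vector in $\reals^{d-1}$; more precisely $\det D\Phi$ equals (up to sign) the product of $1$'s from one $\mathrm{Id}$ block times the remaining $1\times 1$ determinant, so one gets a genuine $(d-1)\times(d-1)$ determinant only after interpreting this correctly — here $F_s(t)$ really plays the role of the last column. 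I should be careful: in fact $\Phi$ maps $\reals^{2(d-1)}$ into $\reals^d$, not into a space of equal dimension, so the naive "$\det D\Phi$" does not make sense. The correct statement is that for fixed $s$, the map $t \mapsto \Phi(s,t)$ is (near $t=s$) a diffeomorphism onto a hypersurface, and one should instead slice: write $x = \Phi(s,t)$, fix the "base point" via $s$, and use $F_s(t)$ as the change of variables in the fiber direction. This is why the lemma is called a \emph{slicing} bound and why the factor $|\det A|^{-1}$ and the matrix $A(DF_\tau(s))^{-1}$ appear — the ellipsoid $A(B)$ containing $F_\tau(\Omega_1)$ is being used to normalize the $s$-integration.

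Concretely, the argument I would run is: (i) restrict attention to $(s,t) \in \Omega_2$ and use $\Phi(\Omega_2) \subset E$; (ii) for each fixed $s \in \Omega_1$, the inner map $t \mapsto \Phi(s,t)$ has image in $E$ with bounded multiplicity and Jacobian comparable to $|\det F_s(t)|$ in an appropriate sliced sense, giving a "local" lower bound for a slice of $E$; (iii) integrate over $s$, but now reparametrize the $s$-variable using $F_\tau$: since $F_\tau(\Omega_1) \subset A(B)$, the map $s \mapsto A^{-1}F_\tau(s)$ pushes $\Omega_1$ into the unit ball $B$, and its Jacobian is $|\det A|^{-1}|\det DF_\tau(s)|$; (iv) combine, so that each Lebesgue integration in $s$ contributes the weight $|\det A|^{-1}|DF_\tau(s)|$ and the integrand $F_s(t)$ gets multiplied by $A(DF_\tau(s))^{-1}$ to land it in the normalized ellipsoid coordinates, yielding exactly
\[
|E| \gtrsim |\det A|^{-1}\int_{\Omega} |A\,(DF_\tau(s))^{-1}F_s(t)| \, dt \, ds,
\]
where $\Omega$ denotes the relevant domain of $(s,t)$-pairs (a subset of $\Omega_2$ with $s \in \Omega_1$). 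I expect the main obstacle to be bookkeeping: making the "slicing" precise — i.e.\ correctly identifying which $(d-1)$-dimensional determinant plays the role of the Jacobian when $\Phi$ is a map between spaces of unequal dimension — and verifying that the ellipsoid normalization via $F_\tau$ interacts correctly with the fiberwise change of variables $u = F_s(t)$. Everything else (the multiplicity bound, the Taylor expansion of $F_s$ near $0$, the fact that $DF_\tau(s)$ is invertible with bounded inverse for $|s| \ll 1$) is routine and can be imported from \S\ref{sec:Omega1} and from \cite{QEx}.
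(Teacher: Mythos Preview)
You correctly notice the dimensional mismatch---$\Phi$ maps $\reals^{2(d-1)}$ into $\reals^d$, so there is no square Jacobian---but your resolution in steps (ii)--(iv) does not actually repair it. Fixing $s$ and letting $t$ vary gives an image that is a $(d-1)$-dimensional hypersurface in $\reals^d$, with zero Lebesgue measure; integrating the resulting ``slice bound'' over $s \in \Omega_1$ (or over its $A^{-1}F_\tau$-image in $B$, which is still $(d-1)$-dimensional) cannot by itself produce a lower bound on the $d$-dimensional volume $|E|$. Your step (iv) asserts that the change of variables $s \mapsto A^{-1}F_\tau(s)$ causes ``the integrand $F_s(t)$ to get multiplied by $A(DF_\tau(s))^{-1}$,'' but a reparametrization of the $s$-integral alone only contributes a Jacobian weight $|\det A|^{-1}|\det DF_\tau(s)|$ in $ds$; it does nothing to the $t$-integrand, and there is no mechanism in your outline that produces the Euclidean norm $|A(DF_\tau(s))^{-1}F_s(t)|$ rather than, say, a determinant or a single coordinate. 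The expression ``$|\det F_s(t)|$'' in your step (ii) is symptomatic: $F_s(t)$ is a vector, not a matrix.

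The paper's argument supplies the missing idea. One first \emph{foliates} $A(B) \supset F_\tau(\Omega_1)$ by lines in a fixed direction $A\nu$: for each $a \in \nu^\perp$ set $s(r) = F_\tau^{-1}(A(r\nu + a))$, so that the curve $r \mapsto s(r)$ sweeps out a one-dimensional slice of $\Omega_1$. Now $(r,u) \mapsto \Psi(r,u) := (u,\, g(u+s(r)) - g(s(r)))$ is a genuine map $\reals^d \to \reals^d$, whose Jacobian one computes to be $|\langle A\,(DF_\tau(s))^{-1}F_s(t),\,\nu\rangle|$ (with $u = t-s(r)$). One must then verify that $\Psi$ is $O(1)$-to-one; this is \emph{not} the Bezout/sphere-intersection argument from \S\ref{sec:inflation}, but rather a separate fact that for fixed $u \neq 0$ the function $r \mapsto g(u+s(r))-g(s(r))$ has only $O(1)$ critical points (its derivative vanishes on the zero set of a nontrivial polynomial in $r$). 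Integrating $|\Psi(\tilde\omega^\nu_a)| \leq |E|$ over $a \in \nu^\perp$ and making the change of variables $(a,r) \mapsto s$ is what produces the $|\det A|^{-1}$ factor. Finally, \emph{averaging over the unit directions} $\nu$ converts the inner product $|\langle \cdot,\nu\rangle|$ into the norm $|\cdot|$. Both the one-dimensional foliation and the $\nu$-average are essential structural steps, not bookkeeping; without them the desired inequality does not follow.
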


Here we recall that 
\[F_s(t) := \frac{t}{\sqrt{1-|t|^2}} - \frac{s}{\sqrt{1-|s|^2}}.\]

The proof of Lemma \ref{lemma:slicing} is modeled on a proof of the analogous lemma in
\cite{QEx}.  

If $\nu \in \reals^{d-1}$ is a unit vector, and $a \in \nu^{\perp}
\subset \reals^{d-1}$, then we define
\[
  s^{\nu}(a,r) := F_{\tau}^{-1}(A(r\nu+a)).
\]
Thus when $\nu$ is fixed, for each $s \in \Omega_1$, there is a unique
choice of $a,r$ with $|a|,|r| \ll 1$ such that $s = s^{\nu}(a,r)$.  For
the moment, let $\nu$ and $a$ be fixed; we will abuse notation by
writing 
\[
  s(r) = s^{\nu}(a,r).
\]
For $|s| \leq 1$, let $g(s) := \sqrt{1-|s|^2}$, and for $r \in \reals$,
and $u \in \reals^{d-1}$ with $|r|,|u| \ll 1$, define
\[
  \Psi(r,u) := (u,g(u+s(r))-g(s(r))).
\]
Note that when $s = s(r)$ and $u = t-s(r)$, $\Psi(r,u) = \Phi(s,t)$.

Both $(r,u)$ and $\Psi(r,u)$ are elements of $\reals^d$, so one can
compute the Jacobian $\det D\Psi(r,u)$.  The lemma would follow from the
estimate
\begin{align} \label{lbPsi}
  |\Psi(\Omega)| \gtrsim \int_{\omega}|\det D\Psi(r,u)| du \, dr,
\end{align}
for measurable sets $\Omega$ (details are forthcoming).  We will establish the validity of (\ref{lbPsi}) by
showing that $\Psi$ is nearly injective.

Supposing that $\Psi(r,u) = \Psi(r',u')$, one has that $u = u'$ and 
\[
  g(u+s(r)) - g(s(r)) = g(u+s(r')) - g(s(r')) =: f_u(r).
\]
We claim that for $u\neq0$, $r \mapsto f_u(r)$ is $O(1)$-to-1 on $|r|\ll 1$.  By rotating coordinates if necessary, we may assume that $A\nu = e_1$ and set $Aa +\frac{\tau}{\sqrt{1-|\tau|^2}} =: a_0$.  Then 
\begin{align*}
s(r) &= F_0^{-1}(re_1+a_0) = \frac{re_1+a_0}{\sqrt{1+|re_1+a_0|^2}}.
\end{align*}
Using this, one can explicitly compute that $f_u'(r)$ vanishes if and only if a certain polynomial $p_{u,a_0}(r)$ of degree $O(1)$ vanishes.  Since $p_{u,a_0}$ is the zero polynomial if and only if $u=0$, the claim is proved, and we may use the estimate (\ref{lbPsi}).

With $\nu$ and $a$ fixed, we define
\begin{align*}
  \omega^{\nu}_a &:= \{(r,t):(s^{\nu}(a,r),t) \in \Omega_2\}, \\
  \tilde{\omega}^{\nu}_a &:= \{(r,u):(r,u+s^{\nu}(a,r)) \in \omega^{\nu}_a\}.
\end{align*}
So far, we have shown that for each $a$ and $\nu$,
\begin{align*}
  |E| \geq |\Psi(\tilde{\omega}^{\nu}_a)|  
  &=  \int_{\omega^{\nu}_a} |\langle
   A\,[D^2g(s(r))]^{-1}F_{s(r)}(t), \nu \rangle| dt\,dr.
\end{align*}
Now, for each $\nu$, 
\begin{align*}
  |E| &\gtrsim \int_{a \in \nu^{\perp}, |a| \ll 1}
   |\Psi(\tilde{\omega}^{\nu}_a)| da \\ &\gtrsim |\det A|^{-1} \int_{\Omega_2} |\langle
  A\,[D^2g(s)]^{-1}F_s(t),\nu\rangle|dt\,ds,
\end{align*}
where the last inequality follows from the change of variables $s =
s^{\nu}(a,r)$ and the fact that $|\det[DF_{\tau}](s)| \sim 1$.
Averaging with respect to unit vectors $\nu$ completes the proof of the
lemma.

\subsection{Combining the Inflation and Slicing bounds.}\label{sec:combine}

The arguments of this subsection are easy modifications of arguments
due to Christ in \cite{QEx}.  

We have shown the following.  There exist $x_0 \in \reals^d$, $\tau \in
\reals^{d-1}$, $|\tau| \ll 1$, a set $\Omega_1$, and a symmetric,
positive-definite linear transformation $A$, $\|A\| \ll 1$ such that
\begin{align*}
  |\Omega_1| \gtrsim \alpha \quad   \det(A) \lesssim \eps^{-b}\alpha \quad   F_{\tau}(\Omega_1) \subset A(B),\\
  s \in \Omega_1 \implies x_0-(s,\sqrt{1-|s|^2}) \in F
\end{align*}
where $B$ is the unit ball, and $b>0$.  From the main lemma of \S
\ref{sec:slicing}, 
\begin{align*}
  |E| &\gtrsim (\det A)^{-1}\int_{\Omega_1}\int_{\scriptf(s)} |A(DF_{\tau}(s))^{-1}F_s(t)|dt\,ds\\
  & \sim (\det A)^{-2} \int_{\Omega_1} \int_{\tilde{\scriptf}(s)} |w| \, dw\,ds,
\end{align*}
after making the change of variables 
\[
w = A(DF_{\tau}(s))^{-1}F_s(t), \qquad \tilde{\scriptf}(s) = A(DF_{\tau}(s))^{-1}F_s(\scriptf(s))
\]
and using the fact that for $s$, $\tau$ small, \[\det(DF_{s\,\text{or}\,\tau}(t\,\text{or}\,s) )\sim 1.\]

Now for each $s \in \Omega_1$ and each $\rho>0$, either 
\begin{align}\label{ineq:bad1}
\int_{\tilde{\scriptf}(s)} |w|\,dt\,ds \gtrsim \rho|\tilde{\scriptf}(s)| \sim \rho (\det A) |\scriptf(s)| \sim \eps^b\rho \alpha \beta.
\end{align}
or
\begin{align}\label{ineq:good1}
  |\tilde{\scriptf}(s) \cap B(0,\rho)| &\gtrsim |\tilde{\scriptf}(s)|.
\end{align}

Recalling (\ref{ineq:ubE}), if we set $\rho = C \eps^{-C'}(\alpha \beta)^{1/(d-1)}$ ($C,C'$ depending only on $d$), then occurrence of (\ref{ineq:bad1}) over a majority of $\Omega_1$ is impossible; it would contradict the upper bound on $|E|$ which results from quasi-extremality.  Hence (\ref{ineq:good1}) must hold for most $s \in \Omega_1$.  Refining $\Omega_1$ to a subset whose size is still $\gtrsim \alpha$, we
may assume that (\ref{ineq:good1}) holds for each $s \in \Omega_1$.

Unwinding the definition of $\tilde{\scriptf}(s)$, we obtain that 
\[
  |F_s(\scriptf(s)) \cap DF_{\tau}(s)\,A^{-1}B(0,\rho)| \gtrsim
  |F_s(\scriptf(s))| \sim \beta.
\]
We also refine $\scriptf(s)$ to a subset whose size is still $\gtrsim \beta$ and assume that $F_s(\scriptf(s)) \subset DF_{\tau}(s)\,A^{-1}B(0,\rho)$.

\section{Proof of the Main Theorem}

By means of a rotation of $\reals^{d-1}$, we may assume that $A$ is diagonal with
eigenvalues
\[
  r_1 \leq r_2 \leq \ldots \leq r_{d-1}.
\]

By what we have proved so far, one could prove Theorem \ref{mainthm}
without too much trouble (with $\rho$ and the $r_i$ indicating the same
quantities), were it not for the non-degeneracy condition
(\ref{rinondeg}) on the $r_i$.  In the article \cite{QEx}, the special
structure of the paraboloid meant that no such condition was necessary.
The primary work of this section, and one of the main new details of this
article, will be to establish the necessity of that inequality.

As $|E(\bfr;\rho)|$ and $|F(\bfr;\rho)|$ are allowed
to be a factor of $\eps^{-B'}$ larger than $|E|$ and $|F|$, resp., if we
can show that $r_1 \gtrsim \eps^{B'}\rho^{1/2}r_{d-1}$, then by enlarging
the various parameters as needed (for instance increasing the size of
$r_1$ and $\rho$ proportionately to one another) we may achieve the
nondegeneracy condition (\ref{rinondeg}), while maintaining the other
conditions.  We assume, by way of contradiction, that
\begin{align}\label{ineq:bad2}
  r_1 \leq B^{-1}\eps^{B'}\rho^{1/2}r_{d-1},
\end{align}
with $B,B'$ large positive constants, yet to be determined.

In this section, we will need to differentiate between two types of constants, those over which we have control via our assumption (\ref{ineq:bad2}), and those which we cannot substantially influence (e.g.~those appearing in the previous two sections).  We will denote the former by $B,B'$ and the latter by $C,C'$, while allowing the constants to vary from line to line (as is typical in the harmonic analysis literature).  Expressions such as $\lesssim$ will always involve implicit constants of the second type.  Though ``$B$'' will also be used to denote the unit ball, we have otherwise exhausted letters $A-F$, and our meaning will be clear from the context.

\subsection{An alternative description of $\scriptf(s)$.}\label{sec:alt_scriptfs}
To simplify the exposition, we assume that $\tau = 0$; if this were not
the case, it could be achieved by rotating the sphere and enlarging
$\rho$ and the $r_i$ by a bounded factor.

We assume that $s \in F_{0}(\Omega_1) \subset A(B)$, which implies that 
\[
  |s_i| \lesssim r_i, \, 1 \leq i \leq d-1.
\]
From this, $[DF_0(s)]_{i,j} = \delta_{i,j} + O(r_i r_j)$, and $DF_0(s)
\, \rho \cdot A^{-1} = \rho A^{-1} + O(\rho)$.  Therefore, by the smallness of the
$r_i$, $DF_0(s) \, A^{-1}(B(0,\rho)) \subset A^{-1}(B(0,C\rho))$, for some
constant $C$ independent of $\eps$, $\rho$, and the $r_i$.  Henceforth
we will ignore this constant.

We suppose that $t \in \scriptf(s)$; then
\[
  |\frac{t_i}{\sqrt{1-|t|^2}} - \frac{s_i}{\sqrt{1-|s|^2}}| \lesssim
   \frac{\rho}{r_i}, \, 1 \leq i \leq d-1.
\]

By our assumption (\ref{ineq:bad2}), as well as the assumptions $\rho \ll 1$ and $\rho \ll r_1 \leq \ldots \leq r_{d-1} \ll 1$, there exists an index $k$, $1 \leq k < d-1$ such that 
\begin{align} \label{def:k}
r_k \leq \rho^{1/2} \leq r_{k+1}.
\end{align}
We let $t = (t_I,t_{II}) \in \reals^k \times \reals^{d-k-1}$.  We will show
that $t$ can be approximated by 
\[
  (t_I,\sqrt{1-|t_I|^2} \cdot s_{II}),
\]
in the sense that for $i > k$, 
\[
t_i = s_i\sqrt{1-|t_I|^2} + O(\frac{\rho}{r_i}).
\]

If $k+1 \leq i \leq d-1$, then
\[
  |t_i - s_i \sqrt{1-|t_I|^2}| \leq |t_i -
   \frac{s_i\sqrt{1-|t|^2}}{\sqrt{1-|s|^2}}| + |s_i| \cdot
   |\frac{\sqrt{1-|t|^2}}{\sqrt{1-|s|^2}} - \sqrt{1-|t_I|^2}|.
   \]
The first summand on the right is $O(\frac{\rho}{r_i})$, which is acceptable.  The second summand is
\begin{align*}
  \sim  r_i \cdot | |t_I|^2 + |s|^2 - |t|^2 - |t_I|^2|s|^2| 
  & \lesssim  r_i \cdot | |s_{II}|^2(1-|t_I|^2) -
  |t_{II}|^2| + O(\rho),
\end{align*}
because $|s_I|^2 = O(\rho)$.  We continue, ignoring the $O(\rho)$ term.  The right side is
\[
  \lesssim r_i \sum_{j = k+1}^{d-1}|t_j + s_j\sqrt{1-|t_I|^2}|
  \cdot |t_j - s_j\sqrt{1-|t_I|^2}| 
  \leq r_i \sum_{j =
  k+1}^{d-1}r_j|t_j-s_j\sqrt{1-|t_I|^2}|, 
\]
since $\frac{\rho}{r_j} \leq r_j$ when $k+1 \leq j \leq d-1$.  We then have
that
\[
  \sum_{i = k+1}^{d-1}|t_i - s_i\sqrt{1-|t_I|^2} \lesssim \sum_{i =
  k+1}^{d-1} \frac{\rho}{r_i} + \sum_{i = k+1}^{d-1}r_i\sum_{j =
  k+1}^{d-1} r_j|t_j - s_j\sqrt{1-|t_I|^2}|,
\]
which by the monotonicity and smallness of the $r_i$ implies that
\[
  |t_{k+1}-s_{k+1}\sqrt{1-|t_I|^2}| \lesssim \sum_{i=k+1}^{d-1}\frac{\rho}{r_i} \leq (d-k-1)\frac{\rho}{r_{k+1}}.
\]
The other inequalities can be established by induction and the
assumption that $\rho \ll r_i$, $1 \leq i \leq d-1$.

\subsection{A few lower bounds.}\label{sec:lbounds}
  We define
\[
  \tilde{B} := \{\tilde{s} = (s_2,\ldots, s_{d-1}) \in \reals^{d-2}:|s_i| <
  r_i, \, 2 \leq i \leq d-1\};
\]
then
\begin{align*}
  \alpha \lesssim |\Omega_1| &= \int_{\tilde{B}} |\{s_1 \in \reals :
  (s_1,\tilde{s}) \in \Omega_1\}|d\tilde{s} \\
  & = r_2 \cdots r_{d-1} {\rm{avg}}_{\tilde{s} \in \tilde{B}}
  |\{s_1:(s_1,\tilde{s}) \in \Omega_1\}|.
\end{align*}
Dividing both sides by $r_2 \cdots r_{d-1}$ and using the fact that 
\[
  \alpha \lesssim \eps^{-C} \det A = \eps^{-C} r_1 \cdots r_{d-1},
\]
we must then have that
\[
  \eps^C r_1 \lesssim {\rm{avg}}_{\tilde{s} \in \tilde{B}} |\{s_1 :
  (s_1,\tilde{s}) \in \Omega_1\}|.
\]
Say $s_{d-1}$ is good if there exists $(s_2,\ldots,s_{d-2}) \in \reals^{d-3}$ such that $\tilde{s}:=(s_2,\ldots,s_{d-1}) \in \tilde{B}$ and $|\{s_1:(s_1,\tilde{s}) \in \Omega_1\}|>\eps^{C}r_1$.  Then we may choose $C$ and the implicit constant large enough that 
\[
|\{s_{d-1}:s_{d-1} \, \text{ is good}\}| \gtrsim \eps^C r_{d-1}.
\]

Given $s \in \Omega_1$, we know that $F_s(\scriptf(s))$ is contained
in $A^{-1}B(0,\rho)$.  By our assumption (\ref{ineq:bad2}), there
exists $k$, $1 \leq k \leq d-2$ so that 
\[
  r_k \leq \rho^{1/2} \leq r_{k+1}.
\]
Arguing as above, we may assume that $t \in \scriptf(s)$ satisfies
\[
  |[F_s(t)]_j| \gtrsim \eps^C \frac{\rho}{r_j}, \, 1 \leq j \leq k,
\]
where the index $j$ indicates the component, while maintaining the lower bound
\[
  |\scriptf(s)| \gtrsim \beta.
\]
  
These assumptions on $C$ and $\scriptf(s)$ will be in force for the
remainder of this section.

\subsection{More slicing.}  
  Let $\tilde{B}$ be as defined above, and let $\tilde{s} \in \tilde{B}$.  We
define 
\[
  \Omega(\tilde{s}) := \{(s_1,t) \in
  \reals^d: s:=(s_1,\tilde{s}) \in \Omega_1 \, \text{and}\, t \in \scriptf(s)\}
\]
and
\[
  \tilde{E}(\tilde{s}) :=
  \{(t,\sqrt{1-|t|^2})-(s,\sqrt{1-|s|^2}): (s_1,t) \in  \Omega(\tilde{s})\}.  
\]
We have established (in \S\ref{sec:slicing}) that
\[
  \Psi:(s_1,t) \mapsto (t,\sqrt{1-|t|^2})-(s,\sqrt{1-|s|^2})
\]
is sufficiently injective that
\[
  |\tilde{E}(\tilde{s})| \gtrsim
   \int_{\Omega(\tilde{s})} |\det D\Psi(s_1,t)|dt\,ds_1.
\]
Moreover, this is equal to
\[
  \int_{\Omega(\tilde{s})}|[F_s(t)]_1|dt\,ds_1, 
\]
and is bounded from below by (a constant times)
\[
  \eps^C r_1 \cdot \frac{\rho}{r_1} \cdot \beta = \eps^C \rho \beta
\]
whenever $\tilde{s}$ satisfies
\begin{align} \label{ineq:good_s}
\eps^Cr_1 \lesssim |\{s_1:(s_1,\tilde{s}) \in \Omega_1\}|.
\end{align}
By the work of the previous subsection, the set of such admissible $\tilde{s}$'s has size $\gtrsim \eps^C|\tilde{B}|$.

We return to \S\ref{sec:combine} for the definition of $\rho$, which
enables us to conclude that
\[
  |\tilde{E}(\tilde{s})| \gtrsim \eps^C \beta^{d/(d-1)}\alpha^{1/(d-1)},
\]
again, for $\tilde{s}$ satisfying (\ref{ineq:good_s}).

\subsection{A disjointness property}
We will prove in this section that the sets $\tilde{E}(\tilde{s}^{(1)})$ and
$\tilde{E}(\tilde{s}^{(2)})$ are disjoint when $\tilde{s}^{(1)}$ and $\tilde{s}^{(2)}$ are sufficiently far
apart.  When combined with our lower bound on $|\tilde{E}(\tilde{s})|$, this
will give us a stronger lower bound on $|E|$.  As we also have the upper
bound 
\[
  |E| \lesssim \eps^{-C} \alpha^{1/(d-1)}\beta^{d/(d-1)}
\]
from the assumption of quasi-extremality, we will be able to obtain a
contradiction and establish the necessity of (\ref{rinondeg}).

We know from \S\ref{sec:alt_scriptfs} that if $s \in \Omega_1$ and $t \in \scriptf(s)$ implies that
\begin{align*}
|t_i| &\lesssim \frac{\rho}{r_i}, \, 1 \leq i \leq k  \\ 
 |t_i -
  s_i\sqrt{1-|t_I|^2}| &\lesssim \frac{\rho}{r_i}, \, k+1 \leq i \leq
  d-1.
\end{align*}
Therefore $t \in \scriptf(s)$ implies that
\[
  ||t_{II}|^2 - |s_{II}|^2(1-|t_I|^2)| \lesssim \sum_{k+1}^{d-1}
    \frac{\rho}{r_i} (r_i + \frac{\rho}{r_i}) \lesssim \rho
\]

Let $x \in \tilde{E}(\tilde{s})$; write $x=-(s,\sqrt{1-|s|^2}) + (t,\sqrt{1-|t|^2})$, where $t \in \scriptf(s)$.  Then
\begin{align*}
|x_i| &\lesssim \frac{\rho}{r_i}, \, 1 \leq i \leq k, \\
|x_i - s_i(\sqrt{1-|x_I|^2} -  1)| &\lesssim \frac{\rho}{r_i}, \, k+1 \leq i \leq d-1\\
|x_d - \sqrt{1-|s_{II}|^2}(\sqrt{1-|x_I|^2} - 1)| &\lesssim \rho;
\end{align*}
all of these inequalities are independent of $s_1,t$, so they describe arbitrary $x \in \tilde{E}(\tilde{s})$.  Since we are also assuming that  $t \in \scriptf(s)$ implies that $|t_i - s_i| > \eps^C\frac{\rho}{r_i}$, $1 \leq i \leq k$, $x \in \tilde{E}(\tilde{s})$ also satisfies
\begin{align*}
\eps^C\frac{\rho}{r_i} < |x_i|, \, 1 \leq i \leq k,
\end{align*}

Therefore if $x \in \tilde{E}(\tilde{s}^{(1)}) \cap \tilde{E}(\tilde{s}^{(2)})$, then
\begin{align}\label{ineq:s1-s2}
  (1-\sqrt{1-|x_I|^2})|s^{(1)}_i - s^{(2)}_i| \lesssim \frac{\rho}{r_i}, \quad i > k.
\end{align}
We are assuming that $|x_I| > \eps^C \frac{\rho}{r_1}$ (recall that the $r_j$ are monotone), $(1-\sqrt{1-|x_I|^2}) \gtrsim \eps^C\frac{\rho}{r_1}$ and hence by (\ref{ineq:s1-s2}),
\[
  |s^{(1)}_i - s^{(2)}_i| \lesssim \eps^{-C} \frac{r_1^2}{\rho r_i}, i > k.
\]

On the other hand, by the work of \S\ref{sec:lbounds}, we may choose a sequence $\tilde{s}^{(1)},\ldots,
\tilde{s}^{(N)}$ with $N \gg 1$ so that for each $j$, $\tilde{s} = \tilde{s}^{(j)}$ satisfies (\ref{ineq:good_s}), and such that whenever $i \neq j$, $|s^{(i)}-s^{(j)}| \geq \frac{\eps^{C}}{N} r_{d-1}$.
Our assumption (\ref{ineq:bad2}) implies that 
\[
  r_{d-1} \geq B \eps^{-B'} \frac{r_1^2}{\rho r_{d-1}}.
\]
We fix $N$ large enough for later purposes and choose $B,B'$ to be sufficiently large that 
\[
B\frac{\eps^C}{N} \eps^{-B'} \gg \eps^{-C}.
\]
Having done this, we ensure that the sets $\tilde{E}(\tilde{s}^{(j)})$ are pairwise disjoint and thus that the size of their union is 
\[
|\bigcup_{j=1}^N \tilde{E}(\tilde{s}^{(j)})| = \sum_{j=1}^N |\tilde{E}(\tilde{s}^{(j)})| \gtrsim N \eps^C \rho \beta.
\]

The above union is contained in $E$.  Therefore for $N$ sufficiently large, depending only on $C$ and thus ultimately on $d$, the above implies a contradiction to the upper bound (\ref{ineq:ubE}) on $|E|$.

\subsection{Conclusion of proof}
  Now we complete the proof of the main theorem.

From what we have seen so far, there exist $r_1,\ldots, r_{d-1},\rho>0$
satisfying conditions (\ref{rismall}-\ref{rinondeg}) such that
\begin{align*}
  r_1 \cdots r_{d-1} \lesssim \eps^{-C'} \alpha \\
  \frac{\rho}{r_1} \cdots \frac{\rho}{r_{d-1}} \lesssim \eps^{-C'} \beta,
\end{align*}
where
\[
  \rho = C \eps^{-C'} \alpha^{1/(d-1)} \beta^{1/(d-1)},
\]
for some large ($\eps$-independent) constants $C,C'$.

From these, one immediately obtains the upper bounds
\[
  |E(\bfr; \rho)| \lesssim \eps^{-C}|E| \, \text{ and } \, |F(\bfr; \rho)|
   \lesssim \eps^{-C}|F|.
\]

Moreover, by relaxing our assumptions on the size of the $\Omega_i$ to 
\begin{align*}
  |\Omega_1| &\gtrsim \alpha \\
  s \in \Omega_1 &\implies |\scriptf(s)| \gtrsim \beta \\
  (s,t) \in \Omega_2 &\implies |\scriptg(s,t)| := |\{u \in
  \reals^{d-1}:(s,t,u) \in \Omega_3 \}| \gtrsim \alpha,
\end{align*}
we may assume that
\begin{align*}
  \Omega_1 \subset \{s:|[F_0(s)]_i| < r_i, \, 1 \leq i \leq d-1\}\\
  \scriptf(s) \subset \{t:|[F_s(t)]_i| < \frac{\rho}{r_i}, \, 1 \leq i
  \leq d-1\} \\
  \scriptg(s,t) \subset \{u:|[F_t(u)]_i| < r_i, \, 1 \leq i \leq d-1\}.
\end{align*}

From this and our definition of the $\Omega_i$, we have that
\begin{align*}
  &s \in \Omega_1 \implies x_0-(s,\sqrt{1-|s|^2}) \in F \cap (F(\frac{\rho}{\bfr};\rho) +
  \{x_0\}) \\
  &(s,t) \in \Omega_2 \implies x_0-(s,\sqrt{1-|s|^2}) +
  (t,\sqrt{1-|t|^2}) \\ &\qquad \in E \cap (E(\frac{\rho}{\bfr};\rho) +   \{x_0\}) \\
  &(s,t,u) \in \Omega_3 \implies x_0-(s,\sqrt{1-|s|^2}) +
  (t,\sqrt{1-|t|^2})  - (u,\sqrt{1-|u|^2}) \\ &\qquad \in F \cap (F(\frac{\rho}{\bfr};\rho) +
  \{x_0\}).
\end{align*}
From the material in \S\ref{sec:inflation}, one then obtains that
\[
  |\tilde{E}| = |E \cap (E(\bfr,\rho) + \{x_0\})| \gtrsim \beta(\alpha\beta)^{1/(d-1)} \gtrsim \eps^{(d+1)/(d-1)}|E|
\]
(similarly, $|\tilde{F}| \gtrsim \eps^{(d+1)/(d-1)}|F|$).  From this and the lower bound
on $|\scriptg(s,t)|$, we finally have the lower bound
\[
  \scriptt(\tilde{E},\tilde{F}) \gtrsim \alpha |\tilde{E}| \gtrsim \eps^{(d+1)/(d-1)}\scriptt(E,F),
\]
and the theorem is proved.

\section{Proof of Theorem \ref{thmsets2}}
By means of rotations and translations, in proving Theorem \ref{thmsets2}, it
suffices to consider the following situation:  $(E,F)$ is an
$\eps$-quasi-extremal pair, $\bfr,\rho$ satisfy inequalities (\ref{rismall}-\ref{rinondeg}), $\tilde{E} := E \cap
E(\bfr;\rho)$ and $\tilde{F} := F \cap F(\bfr; \rho)$, and
\begin{align*}
  \scriptt(\tilde{E}, \tilde{F}) &\gtrsim C^{-1} \eps^{(d+1)/(d-1)}\scriptt(E,F) \\
  |E(\bfr;\rho)| \lesssim \eps^{-C}|E| \, &\text{ and } \, |F(\bfr;
  \rho)| \lesssim \eps^{-C}|F|.
\end{align*}
We may further assume that $\rho \ll 1$ and $\rho \ll r_i \ll 1$.

Our strategy will be a typical one in harmonic analysis; we will divide
$E(\bfr; \rho)$ and $F(\bfr; \rho)$ into $\lesssim \eps^{-C}$ pairs of
quasi-extremal sets $(E_j,F_j)$ of the correct size ($|E_j| \leq |E|$
and $|F_j| \leq |F|$), and then use the pigeon-hole principle to pick
one pair so that $\scriptt(E_j,F_j) \gtrsim \eps^{C} \scriptt(E,F)$.

We will begin with an initial decomposition of $E(\bfr; \rho)$.  We
choose $\eps^C \leq \lambda < 1$ so that
\[
  \lambda^{d} |E(\bfr; \rho)| \leq |E| \, \text{ and } \, \lambda
  |F(\bfr; \rho)| \leq |F|.
\]
Since $\lambda < 1$, inequalities (\ref{rismall}-\ref{rinondeg}) still hold.  We write
\[
  E(\bfr; \rho) = \bigcup_{i = -\lambda^{-1}}^{\lambda^{-1}} E(\bfr;
  \lambda \rho) + \lambda \rho \cdot i \cdot e_d.
\]

We further decompose $E(\bfr; \lambda \rho)$ as
\[
  E(\bfr; \lambda \rho) = \bigcup_{j = 1}^{O(\lambda^{-(d-1)})}
  B_jE(\lambda\bfr; \lambda \rho),
\]
where $B_j$ is the affine transformation $B_j(x) = R_j(x+e_d)-e_d$, and
$R_j$ is the rotation which takes the point $x^j \in S^{d-1} \cap
(E(\bfr;\rho)+e_d)$ to $e_d$, takes $e_d$ to
$(-x^j_1,\ldots,-x^j_{d-1},x^j_d)$, and fixes all points perpendicular 
to $e_d$ and $x^j$.

Thus
\[
  E(\bfr; \rho) = \bigcup_{i = -\lambda^{-1}}^{\lambda^{-1}} \bigcup_{j
  = 1}^{O(\lambda^{-(d-1)})} B_jE(\lambda \bfr;\lambda \rho) + \lambda
  \rho \cdot i \cdot e_d =: \bigcup_{i = -\lambda^{-1}}^{\lambda^{-1}}
  \bigcup_{j = 1}^{O(\lambda^{-(d-1)})} E_{i,j}
\]

Next, for each $i$ and $j$, we will decompose $F(\bfr; \rho)$ into a
union of sets compatible with $E_{i,j}$; this will be surprisingly easy.  
We note here that the set $F(\lambda \bfr; \lambda \rho)$ has the same
dimensions as $F(\bfr; \rho)$ in the directions perpendicular to $e_d$,
but has thickness $\lambda \rho$ rather than $\rho$ in the $e_d$
direction.

First, we wish to know with which portion of $F(\bfr; \rho)$ an element of
$E_{i,j}$ interacts via convolution with the sphere.

Let $B_{i,j}$ be the affine transformation defined by
\[
  B_{i,j}(x) := B_j(x) + \lambda \rho \cdot i \cdot e_d,
\]
so that $E_{i,j} = B_{i,j}E(\lambda \bfr; \lambda \rho)$.  It now
suffices to determine which elements of $B_{i,j}^{-1}F(\bfr;\rho)$ are a
distance $1$ from elements of $E(\lambda \bfr; \lambda \rho)$.  One can
check, for instance by explicitly computing $B_{i,j}^{-1}(y)$ for $y \in
F(\bfr;\rho)$ that $B_{i,j}^{-1}F(\bfr;\rho) \subset
F(\bfr;C\rho)$ for some constant $C$ depending on the dimension.
Now, by increasing $\rho$ as needed, it is sufficient to determine with
which elements of $F(\bfr;\rho)$ lie a distance 1 from a point in
$E(\lambda \bfr;\lambda \rho)$.  

Write $x \in E(\lambda \bfr;\lambda \rho)$ and $y \in
F(\bfr;C\rho)$ as
\begin{align*}
  x = (x',\sqrt{1-|x'|^2}-1+\delta_1) \qquad
  y = (y',-\sqrt{1-|y'|^2} + \delta_2),
\end{align*}
where $|x_i|<\lambda r_i$, $|y_i|< \frac{\rho}{r_i}$, $|\delta_1|<\lambda \rho$,
and $|\delta_2|<\rho$.  If we assume that $|x-y|=1$, then
\begin{align*}
  1 &= |x'|^2 + |y'|^2 + (x_d-y_d)^2 + O(\lambda \rho) \\
  &= |x'|^2 + |y'|^2 + (\sqrt{1-|x'|^2}-1+\sqrt{1-|y'|^2})^2 \\
  &\hspace{.25cm} - 2\delta_2(\sqrt{1-|x'|^2}-1+\sqrt{1-|y'|^2}) + \delta_2^2
  + O(\lambda \rho).
\end{align*}
Next, one applies the inequality 
\[
  1-\sqrt{1-|x'|^2} \lesssim |x'|^2 \lesssim \lambda r_{d-1}^2,
\]
so
\begin{align*}
  2(1-\sqrt{1-|x'|^2})(1-\sqrt{1-|y'|^2}) \lesssim \lambda r_{d-1}^2
  \frac{\rho^2}{r_1^2} \lesssim \lambda \rho \\
  \delta_2(1-\sqrt{1-|x'|^2}) \lesssim \rho \lambda r_{d-1} \leq \lambda
  \rho.
\end{align*}
Using these inequalities in the series of equalities above,
\[
  -2\delta_2\sqrt{1-|y'|^2} + \delta_2^2 = O(\lambda \rho),
\]
which implies that $|\delta_2| \lesssim \lambda \rho$.

The computations in the paragraph above imply that 
\[
  \scriptt(E(\lambda \bfr;\lambda \rho),F(\bfr;\rho)) =
  \scriptt(E(\lambda \bfr;\lambda \rho),F(\lambda \bfr;C \lambda \rho)).
\]
Therefore
\begin{align*}
  \scriptt(\tilde{E},\tilde{F}) &\leq
  \sum_{i=-\lambda^{-1}}^{\lambda^{-1}} \sum_{j=1}^{O(\lambda^{-(d-1)})}
  \scriptt(E \cap B_{i,j}E(\lambda \bfr;\lambda \rho),F \cap
  F(\bfr;\rho)) \\
  &=   \sum_{i=-\lambda^{-1}}^{\lambda^{-1}} \sum_{j=1}^{O(\lambda^{-(d-1)})}
  \scriptt(E \cap B_{i,j}E(\lambda \bfr;\lambda \rho),F \cap
  B_{i,j}F(\lambda \bfr;C\lambda \rho)) \\
  &\leq   \sum_{i=-\lambda^{-1}}^{\lambda^{-1}}
  \sum_{j=1}^{O(\lambda^{-(d-1)})} 
  \scriptt(E \cap B_{i,j}E(\lambda \bfr;C\lambda \rho),F \cap
  B_{i,j}F(\bfr;C\lambda\rho)).
\end{align*}
By the pigeonhole principle, there exists some choice of $i,j$ so that
\[
  \scriptt(E,F) \lesssim \scriptt(\tilde{E},\tilde{F}) \lesssim
  \eps^{-C}   \scriptt(E \cap B_{i,j}E(\lambda \bfr;C\lambda \rho),F \cap
  B_{i,j}F(\lambda \bfr;C\lambda \rho)),
\]
and the Theorem \ref{thmsets2} is proved.

\bibliographystyle{amsplain}
\bibliography{qexsphere}

\end{document}